\definecolor{Mauve}{rgb}{0.88,0.69,1}
\definecolor{Golden}{rgb}{0.99,0.76,0.0}
\definecolor{Carribean}{rgb}{0.4,0.8,0.67}
\definecolor{LightCyan}{rgb}{0.88,1,1}
\newtheorem{definition}{Definition}[section]
\newtheorem{remark}{Remark}[section]
\newtheorem{theorem}[definition]{Theorem}
\newtheorem{lemma}[definition]{Lemma}
\newtheorem{corollary}[definition]{Corollary}
\newtheorem{conjecture}{Conjecture}[section]
\newtheorem{proposition}{Proposition}[section]
\def\smalloverbrace#1{\mathop{\vbox{\m@th\ialign{##\crcr\noalign{\kern3\p@}%
  \tiny\downbracefill\crcr\noalign{\kern3\p@\nointerlineskip}%
  $\hfil\displaystyle{#1}\hfil$\crcr}}}\limits}
\renewcommand{\@biblabel}[1]{[#1]\hfill}
\begin{document}
\nocite{*}
\setcounter{page}{1}

\begin{center}
{\LARGE \bf  The congruence speed formula\\[4mm] } 
\vspace{8mm}

{\Large \bf Marco Rip\`a}
\vspace{3mm}

World Intelligence Network\\ 
Rome, Italy\\
e-mail: \url{marco.ripa@mensa.it}
\vspace{2mm}

\end{center}
\vspace{10mm}


\noindent \sloppy {\bf Abstract:} We solve a few open problems related to a peculiar property of the integer tetration ${^{b}a}$, which is the constancy of its congruence speed for any sufficiently large $b=b(a)$. Assuming radix-$10$ (the well-known decimal numeral system), we provide an explicit formula for the congruence speed $V(a) \in \mathbb{N}_0$ of any $a \in \mathbb{N}-\{0\}$ that is not a multiple of $10$. In particular, for any given $n \in \mathbb{N}$, we prove to be true Rip\`a’s conjecture on the smallest $a$ such that $V(a)=n$. Moreover, for any $a \neq 1 : a \not\equiv 0 \pmod {10}$, we show the existence of infinitely many prime numbers $p_j:=p_j(V(a))$ such that $V(p_j)=V(a)$.

\noindent {\bf Keywords:} Tetration, Decadic number, Exponentiation, Integer sequence, Congruence speed, Modular arithmetic, Radix-10, Dirichlet’s theorem, Arithmetic progression, Prime number.

\noindent {\bf 2020 Mathematics Subject Classification:} 11A07, 11N13.
\vspace{5mm}


\section{Introduction} \label{sec:Intr}
The aim of this paper is to give a general formula for the ``congruence speed" of tetration
\cite{Googology:3, OEIS:13}, affirmatively answering the final conjecture stated in \cite{Ripa:16}. The properties that arise from our study \cite{Yan:20} are valid for many different numeral systems \cite{Germain:2, Urroz:19}, but (from here on out) we assume radix-$10$.

First of all, let us introduce the constancy of the congruence speed of the integer tetration ${^{b}a}$.

\begin{definition} \label{def1.1}
Let $a \in \mathbb{N}-\{0,1\}$ not be a multiple of $10$. Let $d \in \mathbb{N}$. The power tower of height
$b \in \mathbb{N}-\{0\}$ represents the integer tetration
 ${^{b}a} := \begin{cases}
     a         \quad    \quad  \quad   \textnormal{if}  \quad b=1 \\
     a^{(^{b-1}a)}  \quad       \textnormal{if}  \quad      b \geq 2 \,
    \end{cases}
$. 

Given ${^{b-1}a} \equiv {^{b}a} \pmod{10^d} \wedge {^{b-1}a} \not\equiv {^{b}a} \pmod{10^{d+1}}, \forall b>a \geq 2, V(a,b)$
returns the strictly positive integer such that
${^{b}a} \equiv {^{b+1}a} \pmod{10^{d+V(a)}} \wedge {^{b}a} \not\equiv {^{b+1}a} \pmod{10^{d+V(a)+1}}$,

and we define $V(a,b)$ as the ``congruence speed" of the base $a$ at the given height of its hyperexponent $b$. Consequently, if $a=2$, the tetrations for $b$ from $1$ to $5$ are ${^{1}2}=2$, ${^{2}2}=4$, ${^{3}2}=16$, ${^{4}2}=65536$, and ${^{5}2}=\dots19156736$ (respectively), so we can see that $V(2,1)=V(2,2)=0$, while $V(2,3)=V(2,4)=1$.
\end{definition}

Now, let us assume $a \in \mathbb{N} : a \not\equiv 0 \pmod {10}$ in the rest of the paper.

Since it is known \cite{Ripa:16} that $b-1 \geq a \geq2$ is $a$ sufficient but not necessary condition for $V(a,b)=V(a)$, let $b : b>a \geq 2$ be (unless differently specified) such that we can simply indicate as $V(a)$ the ``constant congruence speed" of $a$, where $V(a)$ has been already defined in Reference \cite{Ripa:16}, Definition \ref{def1.2}, assuming $V(1)=0$ (see \cite{Ripa:16}, pages 248–249). To this purpose, it is crucial to underline that the constancy of the congruence speed of $a$ is a general property concerning also cases where the minimum value of $b$ such that $V(a,b)=V(a)$ is smaller than $a$ itself (for $a$ proof that $b \geq 2$ implies $V(3,b)=V(3)$, see \cite{Ripa:16}, Lemma \ref{Lemma1}). Furthermore, for given pivotal tetrations, an in-depth analysis of the smallest $b$ such that the related congruence speed is constant can be found in Reference \cite{Ripa:15}.


\section{A formula for the constant congruence speed of \bm{$a$}} \label{sec:Section2}
In the present Section we study $V(a)$, taking into account every $a \not\equiv 0 \pmod {10}$ \cite{OEIS:13}. In the first subsection, for any given $V(a)=n \in \mathbb{N}-\{0,1\}$, we show which are the smallest bases whose residues modulo $10$ cover the whole set $\{1,2,3,4,5,6,7,8,9\}$. The second subsection is devoted to provide a general formula which maps any $a$ whose constant congruence speed is given, for any $V(a)\in \mathbb{N}$.

\subsection{Finding bases with arbitrarily large \bm{$V(a)$} in the ring of decadic integers} \label{sec:SUBSECTION 2.1}

In order to describe the structure of $V(a) \in \mathbb{N}-\{0\}$ in radix-$10$, for any $a \not\equiv 0 \pmod {10}$, it can be useful to move the problem on $\mathbb{Z}_{10}$, the ring of $10$-adic integers.

\begin{proposition} \label{prop1}
The $10$-adic integers form a commutative ring, and we indicate it as $\mathbb{Z}_{10}$ \cite{Lubin:4}.
\end{proposition}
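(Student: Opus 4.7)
The plan is to realize $\mathbb{Z}_{10}$ as the inverse limit of the tower of finite commutative rings $\{\mathbb{Z}/10^{n}\mathbb{Z}\}_{n\geq 1}$, and then transport the ring structure coordinate-wise. This reduces the proposition to the well-known fact that each $\mathbb{Z}/10^{n}\mathbb{Z}$ is a commutative ring, together with the observation that the transition maps are ring homomorphisms.

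First, I would introduce, for every pair $n\geq m\geq 1$, the natural projection $\pi_{n,m}\colon \mathbb{Z}/10^{n}\mathbb{Z}\to\mathbb{Z}/10^{m}\mathbb{Z}$ induced by reduction modulo $10^{m}$. A direct check (or appeal to the first isomorphism theorem) shows each $\pi_{n,m}$ is a surjective ring homomorphism and that $\pi_{m,k}\circ\pi_{n,m}=\pi_{n,k}$ whenever $n\geq m\geq k$. Then I would define
\[
\mathbb{Z}_{10}:=\varprojlim_{n}\mathbb{Z}/10^{n}\mathbb{Z}=\Bigl\{(x_{n})_{n\geq 1}\in\prod_{n\geq 1}\mathbb{Z}/10^{n}\mathbb{Z}\ :\ \pi_{n,m}(x_{n})=x_{m}\ \forall n\geq m\geq 1\Bigr\},
\]
and remark that this is exactly the set of formal $10$-adic expansions $\sum_{i\geq 0}c_{i}\,10^{i}$ with digits $c_{i}\in\{0,1,\dots,9\}$, where the partial sum modulo $10^{n}$ yields $x_{n}$.

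Next, I would endow $\mathbb{Z}_{10}$ with coordinate-wise addition and multiplication, $(x_{n})+(y_{n}):=(x_{n}+y_{n})$ and $(x_{n})\cdot(y_{n}):=(x_{n}\cdot y_{n})$, and verify that these operations land inside $\mathbb{Z}_{10}$. Well-definedness is immediate because each $\pi_{n,m}$ is a ring homomorphism, so coherence of the factors is preserved by both sum and product. All the ring axioms (associativity of $+$ and $\cdot$, commutativity of $+$ and $\cdot$, distributivity, existence of $0=(0)_{n}$ and $1=(1)_{n}$, and existence of additive inverses $-(x_{n})=(-x_{n})$) then reduce to the same axioms checked coordinate by coordinate in the commutative rings $\mathbb{Z}/10^{n}\mathbb{Z}$. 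In particular, commutativity of multiplication in $\mathbb{Z}_{10}$ follows at once from commutativity in each finite layer.

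I do not expect any serious obstacle here: the whole content of the proposition is the standard categorical fact that an inverse limit (in the category of sets) of commutative rings, taken along ring homomorphisms, inherits a commutative ring structure. The only mildly delicate point worth flagging in the write-up is that, since $10$ is not prime, $\mathbb{Z}_{10}$ is not an integral domain (it has zero divisors coming from the idempotents associated with the factorization $10=2\cdot 5$); so I would explicitly note that the proposition claims only the commutative-ring structure, not domain or field properties, which matches the use made of $\mathbb{Z}_{10}$ in the subsequent analysis of $V(a)$.
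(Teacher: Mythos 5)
Your construction is correct and matches the paper's own framing: the paper offers no proof of this proposition (it is stated with a citation to the literature), but it later adopts exactly your definition, $\mathbb{Z}_{10} := \varprojlim_{n}\mathbb{Z}/10^{n}\mathbb{Z}$ with coordinate-wise operations, in Proposition \ref{prop3}, and your remark that $\mathbb{Z}_{10}$ is not an integral domain is likewise confirmed in Proposition \ref{prop5}. Nothing further is needed.
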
 

\begin{proposition} \label{prop2}
Any positive integer can be represented as a $10$-adic integer $\alpha$. $\alpha$ can be written as an infinitely long string of digits going to the left of a ``fixed digit". The aforementioned fixed digit, that we indicate as $s_1$, is the one which defines the congruence class (AKA residue modulo $10$) of the corresponding base of the tetration ${^{b}a}$. For any $n=1,2,3,\dots$, let us consider $S(n) := s_n\_s_{(n-1)}\_\dots\_s_2\_s_1 \in \frac{\mathbb{Z}}{10^n \mathbb{Z}}$, where the underscore symbol has been introduced in order to indicate the juxtaposition of nonnegative integers so that $S(n+1)=s_{(n+1)}\_S(n)$. The residues modulo $10^n$ satisfy the congruence relation $S(n) \equiv S(n+1) \pmod {10^n}$. Now, assume $s_1 \in \{1,2,3,\dots,9\}$ and, if $n \geq 2$, let $s_{j+1} \in \{0,1,2,\dots,9\}$ for every $j \in \{1,\dots,n-1\}$. In particular, we have that $a_{s_1} (n) := \sum_{j=0}^{n-1} s_{j+1} \cdot 10^j \Rightarrow a_{s_1} (n) \equiv s_1 \pmod {10}$. Thus, given $n$, $a_{s_1} (n)$ is a strictly positive decimal integer, smaller than $10^{n+1}$, having $s_1$ as its least significant digit.

On the other hand, we know that, $ \forall a_{s_1}(n)$, $\exists α \in \mathbb{Z}_{10}$ such that $α=\sum_{j=0}^{+\infty} s_{j+1} \cdot 10^j \equiv \sum_{j=0}^{n-1}s_{j+1} \cdot 10^j \pmod {10^n}=a_{s_1} (n)$.
\end{proposition}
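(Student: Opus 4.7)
The plan is to unfold the standard inverse-limit presentation of $\mathbb{Z}_{10}$ and then read off each clause of the proposition from it. Explicitly, one realises $\mathbb{Z}_{10} \cong \varprojlim_n \mathbb{Z}/10^n\mathbb{Z}$ with respect to the canonical reduction maps $\pi_{n+1,n} \colon \mathbb{Z}/10^{n+1}\mathbb{Z} \to \mathbb{Z}/10^n\mathbb{Z}$. An element $\alpha \in \mathbb{Z}_{10}$ is thereby a coherent tower of residues $(S(n))_{n \geq 1}$ with $\pi_{n+1,n}(S(n+1)) = S(n)$, which is exactly the congruence $S(n) \equiv S(n+1) \pmod{10^n}$ stated in the proposition, so that relation is built into the definition rather than a separate fact to verify. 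The commutative ring structure, already granted by Proposition~\ref{prop1}, is inherited componentwise from the inverse limit.

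Next I would invoke the division algorithm in $\mathbb{Z}$, applied iteratively to each $S(n)$, to obtain its unique base-$10$ expansion $S(n) = \sum_{j=0}^{n-1} s_{j+1} \cdot 10^j$ with $s_{j+1} \in \{0,1,\ldots,9\}$. The coherence condition then forces the first $n$ digits of $S(n+1)$ to coincide with those of $S(n)$, so the global digit sequence $(s_j)_{j \geq 1}$ is well-defined and the juxtaposition identity $S(n+1) = s_{n+1}\_S(n)$ holds automatically. The hypothesis $s_1 \in \{1,\ldots,9\}$ is equivalent to $\gcd(\alpha,10)=1$, matching the standing assumption $a \not\equiv 0 \pmod{10}$ on the base of the tetration. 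For the embedding of positive integers, I would take the ordinary finite decimal expansion of $a \in \mathbb{N}-\{0\}$ and pad it on the left with zeros to obtain the required infinite string whose fixed digit is $s_1 \equiv a \pmod{10}$; the truncations $a_{s_1}(n)$ are then positive integers bounded as in the statement and sharing the least significant digit $s_1$ with $a$.

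For the converse direction I would start from an admissible digit sequence and form the formal series $\alpha = \sum_{j=0}^{+\infty} s_{j+1} \cdot 10^j$, noting that it converges in the $10$-adic metric because $|10^j|_{10} = 10^{-j} \to 0$; its partial sums are precisely the $a_{s_1}(n)$, so $\alpha$ projects to $a_{s_1}(n)$ in each $\mathbb{Z}/10^n\mathbb{Z}$ and defines a genuine element of $\mathbb{Z}_{10}$. The only conceptual point that deserves emphasis, rather than a real obstacle, is that the infinite left-extending string $\ldots s_3 s_2 s_1$ must be interpreted in the $10$-adic topology and not in the Archimedean one: the series need not converge as a real number, yet completeness of $\mathbb{Z}_{10}$ guarantees that it represents a bona fide $10$-adic integer. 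Everything else amounts to bookkeeping with the division algorithm and the inverse-limit definition.
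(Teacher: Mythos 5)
Your proposal is essentially correct and supplies details that the paper itself omits: the paper states Proposition~\ref{prop2} as expository background on $\mathbb{Z}_{10}$ and gives no proof at all, deferring to the cited references, so there is no ``paper's route'' to compare against beyond the inverse-limit definition it invokes in Proposition~\ref{prop3}. Your argument via $\varprojlim_n \mathbb{Z}/10^n\mathbb{Z}$, the division algorithm for the digit expansion, and $10$-adic convergence of the formal series is the standard and appropriate way to justify every clause. One genuine slip: you assert that $s_1 \in \{1,\dots,9\}$ is equivalent to $\gcd(\alpha,10)=1$. It is not; it is equivalent to $10 \nmid \alpha$, i.e.\ $\alpha \not\equiv 0 \pmod{10}$, which is the paper's actual standing assumption. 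Coprimality to $10$ corresponds to $s_1 \in \{1,3,7,9\}$, and the distinction matters throughout the paper, since bases ending in $2,4,5,6,8$ are explicitly allowed and treated separately (e.g.\ in Proposition~\ref{prop7} and Equations~(\ref{eq:8})--(\ref{eq:11})). Correcting that identification, the rest of your argument stands. You might also note that the bound in the statement can be sharpened: $a_{s_1}(n) = \sum_{j=0}^{n-1} s_{j+1}\cdot 10^j \leq 10^n - 1 < 10^n$, which is stronger than the stated $a_{s_1}(n) < 10^{n+1}$.
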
 Consequently, the idea to work with decadic integers can be an efficient approach to solve (radix-$10$) the problem of finding, for each congruence class modulo $10$ belonging to the set $\{1,3,7,9\}$, which is the smallest tetration base whose constant congruence speed is equal to any given positive integer $n$.

\begin{definition} \label{def1.2}
For every $n \in \mathbb{N}-\{0\}$, we define $\tilde{a}(n) := \min_n \{a \not\equiv 0 \pmod {10} : V(a)=n\}$. In addition, for any given $s_1 \in \{1,2,3,4,5,6,7,8,9\}$, let us denote with $a_{\{s_1 \}}(n)$ the generic element of the set $\textnormal{A}_{s_1}(n) :=\{a : a \equiv s_1 \pmod {10} \wedge V(a)=n\}$. Consequently, $\forall n \geq 1$, $\tilde{a}_{s_1}(n)= \min_n\left(\textnormal{A}_{(s_1)}(n)\right)$ and $\tilde{a}_{\{1,2,3,4,5,6,7,8,9\}} (n)=\min_n \{\tilde{a}_1(n),\tilde{a}_2(n),\dots,\tilde{a}_9(n)\}=\tilde{a}(n)$.
\end{definition}

In order to avoid notational misunderstandings, let us specify that, from here on, $a_{\{c,d\}} \in \{\textnormal{A}_c \cup \textnormal{A}_d \}$ refers to every (generic) tetration base that is congruent modulo $10$ to $c$ or $d$ (assuming that $c$ and $d$ represent two distinct elements of the set $\{1,2,3,4,5,6,7,8,9\}$). We use the notation $a_{[c,d]}$  to indicate that we are considering one particular element from the congruence class $c$ modulo $10$ and also another one from the congruence class $d$ modulo $10$ so that $\tilde{a}_{[c,d]}$  (see Section \ref{sec:SUBSECTION 2.2}) returns the smallest base which is congruent modulo $10$ to $c$ and the smallest one which is congruent modulo $10$ to $d$, while $\tilde{a}{\{c,d\}} = \min(\tilde{a}_c,\tilde{a}_d)$ gives the smallest base that is congruent modulo $10$ to $c$ or $d$. In particular, let us simply write $a_{s_1}$ (omitting brackets) if, by selecting each one of the allowed congruence classes $s_1$, we always get a unique base, making it clear that the elements belonging to special subsets of $\{a \in \mathbb{N} : a \not\equiv 0 \pmod {10}\}$ will be uniquely marked by adding symbols on the top of $a$ itself (such as the aforementioned $\tilde{a}_c$ or even ${\smalloverbrace{a}}^{*}$), while different mathematical objects will be introduced by using other letters so that $y_c(5)$, which indicates the $c$-th solution in $\mathbb{Z}_{10}$ of the equation $y^5=y$ (see Proposition \ref{prop6}), should not be confused with any base ending in $c$ (the decadic integer originated by $y_c(5)$ is in no way forced to have $c$ as its rightmost digit). To this purpose, we finally observe that 
$y^5=y$ returns at most two decadic integers, say $\alpha'_c$ and $\alpha''_c$ , both having the same $c$ as their rightmost digit; since each of the $α'_{s_1}$ is well defined for any given $s_1=1,2,\dots,9$, we are free to introduce some general properties pertaining to the $α'_{s_1}$ without needing to add unnecessary brackets.

\begin{proposition} \label{prop3}
Let us consider the standard decimal numeral system (radix-$10$). It follows that the corresponding $g$-adic ring that we have to take into account is the decadic one ($g=10$) \cite{Mahler:5}, but $10$ is not a prime number or a power of a prime (since $10=2 \cdot 5=p_1 \cdot p_2$, $p_1 \neq p_2$). Thus, for every odd $s_1$ (as defined in Proposition \ref{prop2}), we can find more than one polymorphic $\alpha = \dots \_s_1$ that arises when we solve in $\mathbb{Z}_{10} := \displaystyle\lim_{\longleftarrow}\frac{\mathbb{Z}}{10^n \mathbb{Z}}$ (i.e., the set of formal series $\sum_{j=0}^{+\infty} s_{j+1} \cdot 10^j$, $s_{j+1} \in \{0,1,2,3,4,5,6,7,8,9\}$) the fundamental equation $y^t = y$.

Therefore, assuming $s_{n+1} \neq 0$, $\forall s_1 \in \{1,3,5,7,9\}$, we can find two order-$n$ residues of as many polymorphic integers (i.e., $\alpha' \neq \alpha''$ such that $\alpha' \equiv \alpha'' \pmod {10}$) whose expansions modulo $10^n$ are always characterized by a constant congruence speed equal to $n$ (e.g., $s_1=7 \Rightarrow \alpha'_{s_1}=\dots 66295807$ and $\alpha''_{s_1}=\dots 92077057$ both satisfy $y^5=y$, and $n=7$ implies that $V(\alpha'\pmod {10^7})=V(6295807)=V(\alpha''\pmod {10^7})= V(2077057)=7$ since the eighth rightmost digit of $\alpha'_7$ and $\alpha''_7$ is not zero).
\end{proposition}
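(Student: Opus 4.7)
The plan is to split Proposition 3 into two pieces: (A) the existence, for every odd digit $s_1$, of two distinct polymorphic decadic integers solving $y^5=y$ and sharing that last digit; and (B) the translation of this algebraic fact into the congruence-speed identity $V(\alpha'_{s_1} \pmod{10^n}) = V(\alpha''_{s_1} \pmod{10^n}) = n$ whenever $s_{n+1} \neq 0$. The foundation of (A) is the Chinese Remainder isomorphism $\mathbb{Z}_{10} = \varprojlim \mathbb{Z}/10^n\mathbb{Z} \cong \mathbb{Z}_2 \times \mathbb{Z}_5$, which is available precisely because $10 = 2\cdot 5$ with $p_1 \neq p_2$; this is what produces zero-divisors in $\mathbb{Z}_{10}$ and hence the ``extra'' roots of $y^5=y$ beyond the naive three $0,\pm 1$ that one would find in either local factor taken by itself.

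In each factor the equation factors as $y(y-1)(y+1)(y^2+1)$. The only delicate piece is $y^2+1$, which I would dispatch by Hensel's lemma together with the residue-field criterion: $-1$ is not a square modulo $4$, so $y^2+1$ has no root in $\mathbb{Z}_2$, whereas $2^2 \equiv -1 \pmod 5$ with non-vanishing derivative lifts to two roots $\pm i \in \mathbb{Z}_5$. Hence $y^5=y$ has $3$ roots in $\mathbb{Z}_2$ and $5$ in $\mathbb{Z}_5$, yielding $15$ decadic solutions via CRT. A short case analysis on the last digit, through $\mathbb{Z}/10\mathbb{Z} \cong \mathbb{Z}/2\mathbb{Z} \times \mathbb{Z}/5\mathbb{Z}$, then groups these $15$ roots by $s_1$: for each odd $s_1 \in \{1,3,7,9\}$ the pre-image is $\{(+1,y_5),(-1,y_5)\}$ for the $y_5 \in \{\pm 1,\pm i\}$ determined by $s_1 \pmod 5$, while for $s_1=5$ it is $\{(+1,0),(-1,0)\}$. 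In every case the two pairs are distinct in $\mathbb{Z}_2 \times \mathbb{Z}_5$, so the associated decadic integers $\alpha'_{s_1} \neq \alpha''_{s_1}$ are genuinely polymorphic, and a direct CRT computation recovers the $s_1=7$ example $\ldots 66295807$, $\ldots 92077057$ cited in the statement.

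For (B), I would invoke the characterization of $V(a)$ already established in \cite{Ripa:16}: whenever $\alpha$ is a decadic unit with $\alpha^5=\alpha$, setting $a = \alpha \pmod{10^n}$ yields $a^{4k+1} \equiv a \pmod{10^n}$ for every $k \geq 0$, which forces the tetration ${^b a}$ to converge $10$-adically to $\alpha$ at the rate measured by the $10$-adic valuation of $\alpha - a$; the hypothesis $s_{n+1}(\alpha) \neq 0$ pins that gap at exactly $n$, giving $V(a)=n$. The main obstacle is keeping (B) self-contained without dragging into Proposition 3 the general formula for $V(a)$ that the rest of the paper is about to build; I would sidestep this by quoting only the relevant intermediate lemma from \cite{Ripa:16}, so the substantive work of Proposition 3 reduces to the CRT/Hensel analysis carried out in (A).
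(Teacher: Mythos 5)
First, a caveat on the comparison: the paper never actually supplies a proof environment for Proposition \ref{prop3}; the statement is asserted and its supporting content is distributed over Propositions \ref{prop5}--\ref{prop7} and Section \ref{sec:SUBSECTION 2.2}. Your part (A) is correct and is essentially that material made rigorous: the paper reaches $\mathbb{Z}_{10}\cong\mathbb{Z}_2\oplus\mathbb{Z}_5$ via the explicit idempotents $h=\lim 5^{2^n}$ and $r=\lim 2^{5^n}$ and simply lists the $15$ roots of $y^5=y$ in Equation (\ref{eq:2}), whereas you derive the count $3\times 5=15$ from Hensel's lemma and the non-solvability of $y^2\equiv-1\pmod 4$, and your grouping $\{(+1,y_5),(-1,y_5)\}$ by last digit matches the paper's pairs $\alpha'_{s_1},\alpha''_{s_1}$ exactly. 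That half is fine and arguably cleaner than the source.

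Part (B) is where the genuine gaps are. (i) Your argument is restricted to decadic \emph{units}, which silently drops $s_1=5$ even though the proposition quantifies over $s_1\in\{1,3,5,7,9\}$; for $s_1=5$ the roots are $\pm h$, which are zero divisors, and the paper's own Proposition \ref{prop6} states that $V(\pm h\bmod 10^n)\geq n+1$ for \emph{every} value of $s_{n+1}$ (e.g., $h\bmod 10^2=25$ has $V(25)=3$, per Table \ref{Table:1}), so the ``equal to $n$'' conclusion cannot be recovered for that class by any argument --- you would need to flag this discrepancy rather than prove it. A similar issue occurs at $s_1=1$, where one of your two CRT solutions is the trivial root $1$ with $V(1)=0$. (ii) The decisive step, passing from $\nu_{10}(\alpha-a)=n$ (which the hypothesis $s_{n+1}\neq 0$ does give you) to $V(a)=n$, is precisely the substantive content of the proposition, and you defer it to an unnamed ``intermediate lemma'' of \cite{Ripa:16}. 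What is actually quoted from that reference in Section \ref{sec:SUBSECTION 2.2} is only the upper bound $V(a)\leq\min(\acute{n},\grave{n})$ in terms of the $2$- and $5$-adic valuations of $a^2\pm 1$; the matching lower bound (that truncating a root of $y^5=y$ at a nonzero digit attains the bound) is what Propositions \ref{prop6}--\ref{prop7} and Equations (\ref{eq:6})--(\ref{eq:17}) are constructed to establish, so invoking it here is circular as the proposal stands. (iii) The heuristic that ${^{b}a}$ ``converges $10$-adically to $\alpha$'' is not correct: the $10$-adic limit of the tower is a fixed point of $x\mapsto a^x$, not the chosen root $\alpha$ of $y^5=y$, and the congruence speed is controlled by valuations of $a^{(\cdot)}-1$ via lifting-the-exponent rather than by $\nu_{10}(\alpha-a)$ directly; the two quantities coincide only after the work in (ii) is done.
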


\begin{conjecture} \label{conj1}
Let the tetration base $a$ be greater than $1$. Let $\textnormal{len}(a) \in \mathbb{N}-\{0\} : 10^{\textnormal{len}(a)-1} \leq a < 10^{\textnormal{len}(a)}$  denote the number of digits of $a$. If $a \not\equiv \{0,3,7\} \pmod {10}$, then $b \geq \textnormal{len}(a)+2$ is a sufficient condition for $V(a,b)=V(a)$.
\end{conjecture}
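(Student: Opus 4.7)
\noindent The plan is to transfer the problem into the $10$-adic setting provided by Propositions~\ref{prop1}--\ref{prop2} and to estimate, for each admissible residue $s_1 := a \bmod 10$, the smallest tower height at which ${^{b}a}$ and ${^{b+1}a}$ already coincide $10$-adically up to the onset of the constant-speed regime. Equivalently, since the decadic limit of ${^{b}a}$ is determined by the fixed-point equation $y \equiv a^{y} \pmod{10^{k}}$ read for successively larger $k$, it is enough to show that ${^{b-1}a}$ already lies in the eventually periodic orbit of the map $x \mapsto a^{x} \pmod{10^{k}}$ for every $k$ relevant to the stabilization.

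First I would split into the admissible residue classes $s_1 \in \{1,2,4,5,6,8,9\}$, organized by the factorization $10 = 2 \cdot 5$. For $s_1 \in \{2,4,6,8\}$ (even bases coprime to $5$) and $s_1 = 5$ (odd bases coprime to $2$), the Chinese Remainder Theorem reduces the problem to controlling one multiplicative-order component while the other component collapses to $0$ modulo the relevant prime power after a single additional level of the tower. For $s_1 \in \{1,9\}$ the base is a $10$-adic unit and both prime components are governed by the unit group $(\mathbb{Z}/10^{k}\mathbb{Z})^{\times}$; the key feature here is that $a \equiv \pm 1 \pmod{10}$ forces $v_{5}(a^{2}-1) \geq 1$, which makes the $5$-adic stabilization extremely fast and keeps the required tower height small.

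The quantitative heart of the argument is the iterated estimate ${^{j+1}a} = a^{{^{j}a}} \geq 10^{\,{^{j}a}/\textnormal{len}(a)}$, applied from $j=2$ upwards, which produces a doubly-exponential lower bound on ${^{\textnormal{len}(a)+1}a}$. This bound dominates every multiplicative order that can appear during the recursive lifting modulo $10^{k}$, so once $b-1 \geq \textnormal{len}(a)+1$ one may chain a lifting-the-exponent identity of the form $v_{10}({^{b+1}a} - {^{b}a}) = v_{10}({^{b}a} - {^{b-1}a}) + V(a)$ at every further step, obtaining $V(a,b) = V(a)$.

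The principal obstacle I expect is explaining the exclusion of $s_1 \in \{3,7\}$. Those bases have multiplicative order $4$ modulo $10$, so the exponent of $a$ must be tracked modulo $4$ as an additional layer, and the Teichm\"uller-type idempotents behind the solutions of $y^{5}=y$ (Proposition~\ref{prop3}) introduce a one-level delay in the $5$-adic stabilization. Showing that this delay costs \emph{exactly} one extra tower level outside $s_1 \in \{3,7\}$, and not more, will require a sharp case analysis verifying the bound on the smallest and largest bases of each admissible class; the delicate point is the edge case $a$ just below $10^{\textnormal{len}(a)}$, where the growth estimate above is tightest.
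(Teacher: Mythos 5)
You should first be aware that the paper does not prove this statement: it is presented as Conjecture~\ref{conj1} and remains open. The only support the paper offers is Remark~\ref{rem1}, which confirms the claim for the residue classes modulo $25$ where $V(a)=1$, using the monotonicity $V(a,b+1)\leq V(a,b)$ for $b\geq 3$ together with $V(a)\geq 1$. So there is no proof in the paper to compare yours against, and a correct argument here would be a genuinely new result, not a reconstruction.

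Your proposal, as written, does not close the gap. The central step is circular: the identity $v_{10}({^{b+1}a}-{^{b}a})=v_{10}({^{b}a}-{^{b-1}a})+V(a)$ holding from $b-1\geq \textnormal{len}(a)+1$ onward \emph{is} the statement to be proved; asserting that a ``doubly-exponential lower bound dominates every multiplicative order'' does not establish it, because the onset of constancy is not controlled by the size of ${^{b-1}a}$ alone (the known sufficient condition from \cite{Ripa:16} is $b\geq a+1$, and tightening it to $b\geq\textnormal{len}(a)+2$ is exactly the open content, as the paper's discussion of the base $143^{625}$ in Remark~\ref{rem1} shows: the speed can strictly decrease for several steps before stabilizing). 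Moreover your quantitative estimate ${^{j+1}a}=a^{{^{j}a}}\geq 10^{{^{j}a}/\textnormal{len}(a)}$ is false for one-digit bases (take $a=2$, $\textnormal{len}(a)=1$: it would assert $2^{x}\geq 10^{x}$), and the proposal itself defers the decisive work (``will require a sharp case analysis'') on the $s_1\in\{3,7\}$ exclusion and the edge case $a$ near $10^{\textnormal{len}(a)}$. What you have is a plausible research plan aligned with the paper's $10$-adic framework (Propositions~\ref{prop3}--\ref{prop6} and Proposition~\ref{prop4}, which exhibits the bases $a^{*}\equiv\{3,7\}\pmod{10}$ with a one-level delay), not a proof.
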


\begin{remark} \label{rem1}
Assuming $a \equiv \{2,3,4,6,8,9,11,12,13,14,16,17,19,21,22,23\} \pmod {25}$, by Reference \cite{Ripa:16}, Hypothesis 1, $V(a)=1$. This confirms the statement of Conjecture \ref{conj1} for any $a$ as above, since we know that $V(a) \geq 1 \wedge V(a,b+1) \leq V(a,b)$ holds for any $b \geq 3$ \cite{Ripa:15, Ripa:16}; as a clarifying example of the property $V(a,b+1) \leq V(a,b)$ extended to nontrivial congruence classes modulo $25$, we can take a look at \cite{Ripa:15}, page 27, which includes the $\textit{phase shift analysis}$ of the base $143^{625}$ congruent to $18$ modulo $25$, explaining why $V(143^{625},1)=0 \wedge V(143^{625},2)=V(143^{625},3)=6 \wedge V(143^{625},4)=5 \wedge V(143^{625}, \hspace{1mm} b : b \geq 5)=V(a)=4$ occurs. \linebreak Thus, $a : V(a)=1$ implies that, for any $b \geq 3$, $1=V(a,b) \geq V(a,b+1) \geq 1$ so that $V(a,b+1)=V(a,b)=1$ (e.g., $V(2, \hspace{1mm} b : b \geq 3)=V(2, \hspace{1mm} b : b \geq \textnormal{len}(2)+2)=1$ is consistent with the expected result \cite{Googology:3}).
\end{remark}

\begin{proposition} \label{prop4}
The constant congruence speed of $a$ is well defined if and only if $a \not\equiv 0 \pmod {10}$ \cite{Ripa:16}. In particular, $V(a) \geq 1 \Rightarrow a \geq 2$, and $b \geq a+1$ represents a sufficient, but not a necessary, condition for the constancy of the congruence speed of $a$. Moreover, $\exists a^* \equiv \{3,7\}\pmod {10} : V(a^*,2 \leq b \leq \textnormal{len}(a^* )+2) = 1+V(a^*,b \geq \textnormal{len}(a^*)+3) = 1+V(a^*)$, where $a^* \in \{807,81666295807,81907922943,\dots\}$, and this follows from Proposition \ref{prop6} (check ($i \in \{3,4,9,10\},n>2$) in Equation (\ref{eq:2}) such that, picking each of the four aforementioned values of $i$ so that $\alpha_{\{3,7\}}$  is given, $\frac{\alpha_{\{3,7\}} \pmod{10^{n+1}} - \alpha_{\{3,7\}} \pmod {10^n}}{10^n }=5$).
\end{proposition}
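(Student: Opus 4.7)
The plan is to dissect the proposition into four logically independent claims and address them in order. First, I would establish the biconditional characterising when $V(a)$ is well defined. Second, I would verify the simple implication $V(a)\geq 1 \Rightarrow a\geq 2$. Third, I would prove the sufficiency of $b\geq a+1$ for constancy and exhibit a counterexample to its necessity. Fourth, I would construct the anomalous bases $a^*$ by invoking Proposition~\ref{prop6}.

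For the first claim, the reverse direction (that $a\not\equiv 0\pmod{10}$ implies $V(a)$ is well defined) is already established in \cite{Ripa:16}, so I would simply cite it. For the forward direction, I would argue that if $a=10k$, then every tower ${^{b}a}$ with $b\geq 2$ has at least ${^{b-1}a}$ trailing zeros, a quantity that diverges with $b$. Consequently ${^{b}a}\equiv {^{b+1}a}\equiv 0\pmod{10^d}$ for every fixed $d$ once $b$ is sufficiently large, which prevents the existence of a single finite $d$ satisfying the non-congruence modulo $10^{d+1}$ required by Definition~\ref{def1.1}; hence $V(a,b)$ cannot stabilise at any fixed positive integer. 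The second claim is immediate from the convention $V(1)=0$ adopted in the discussion following Definition~\ref{def1.1}, so only $a\geq 2$ can possibly yield $V(a)\geq 1$.

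For the third claim, the sufficiency of $b\geq a+1$ is the observation recalled in the paragraph after Definition~\ref{def1.1}, which I would attribute to \cite{Ripa:16}. Non-necessity is witnessed explicitly by $a=3$: Lemma \ref{Lemma1} of \cite{Ripa:16} gives $V(3,b)=V(3)$ already for $b\geq 2$, even though $b=2<a+1=4$. The more substantive part is the fourth claim, concerning the exceptional bases $a^*\equiv\{3,7\}\pmod{10}$. The plan is to invoke Proposition~\ref{prop6}, which provides the decadic solutions $\alpha$ of $y^5=y$ ending in the digit $3$ or $7$. Their truncations $\alpha\pmod{10^n}$ furnish an infinite family of candidate bases $a^*$, and the specific values $807,\,81666295807,\,81907922943,\dots$ are precisely the residues of these decadic integers modulo successive powers of $10$ for which the structural identity $\frac{\alpha\pmod{10^{n+1}}-\alpha\pmod{10^n}}{10^n}=5$ (arising from the cases $i\in\{3,4,9,10\}$, $n>2$ in Equation~(\ref{eq:2})) holds. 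I would then use the stability in $\mathbb{Z}_{10}$ of the iterates of $y\mapsto a^{*y}$ around these decadic fixed points to show that $V(a^*,b)=V(a^*)+1$ throughout the intermediate window $2\leq b\leq \textnormal{len}(a^*)+2$ and then drops by exactly one unit to $V(a^*)$ once $b\geq \textnormal{len}(a^*)+3$.

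The main obstacle is this final drop-by-one calculation. I would carry it out by comparing the $10$-adic valuation of ${^{b+1}a^*}-{^{b}a^*}$ at the threshold $b=\textnormal{len}(a^*)+2$ with its value in the stable regime. The carry contribution produced by the digit $5$ appearing in the identity above is exactly what downgrades the congruence speed by a single unit at the critical height; ruling out any further drop requires the full strength of Proposition~\ref{prop6}, which pins down the subsequent digits of $\alpha$ and forces the successor differences to return to the pattern dictated by $V(a^*)$.
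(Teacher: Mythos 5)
Your decomposition into four claims mirrors exactly how the paper treats this proposition: the paper gives no standalone proof environment for Proposition \ref{prop4}, and its entire justification consists of citing \cite{Ripa:16} for well-definedness and for the sufficiency of $b\geq a+1$, and of pointing at Proposition \ref{prop6} together with the observation that the $(n+1)$-th decadic digit of $\alpha_{\{3,7\}}$ equals $5$ for $i\in\{3,4,9,10\}$ and $n>2$. Your handling of the first three claims is correct and matches the paper's route (your trailing-zeros argument for the forward direction of the biconditional, and the witness $a=3$ with $V(3,b)=V(3)$ for $b\geq 2$, are exactly the facts the paper leans on).

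The gap is in the fourth claim, which is where the actual content of the proposition lives. You correctly identify the bases $a^*$ as the truncations $\alpha_{\{3,7\}}\pmod{10^n}$ at positions where the next digit is $5$, but the assertion to be proved --- that $V(a^*,b)=1+V(a^*)$ for \emph{every} $b$ in the window $2\leq b\leq \textnormal{len}(a^*)+2$, and that the speed drops by \emph{exactly} one (not more, not at a different height) at $b=\textnormal{len}(a^*)+3$ --- is never established. You announce a comparison of $10$-adic valuations of ${^{b+1}a^*}-{^{b}a^*}$ across the threshold and attribute the drop to a ``carry contribution produced by the digit $5$,'' but this is a heuristic, not an argument: nothing in Proposition \ref{prop6} or Equation (\ref{eq:2}) as stated yields the height-dependence of $V(a^*,b)$, since those results only concern the limiting value $V(\alpha\pmod{10^n})$ and say nothing about the transient regime $b\leq \textnormal{len}(a^*)+2$. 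To be fair, the paper itself supplies no more than the same pointer (its parenthetical ``check $(i\in\{3,4,9,10\},n>2)$\dots'' is an instruction to verify, not a verification), so your proposal reproduces the paper's level of rigor; but as a self-contained proof the decisive step --- why the $5$ in position $n+1$ forces a surplus of exactly one unit of congruence speed for exactly the heights $2\leq b\leq \textnormal{len}(a^*)+2$ --- is missing in both.
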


\begin{proposition} \label{prop5}
$g=10=2 \cdot 5=p_1 \cdot p_2 \Rightarrow \textnormal{gcd}(p_1,p_2)=1$ (see Proposition \ref{prop3}). Since in $\mathbb{Z}_{10}$ (which is not an integral domain) $\exists h \neq 0 \wedge r \neq 0$ such that $h \cdot r=0$, it follows that, for every $n \in \mathbb{N}$, $5^{2^n } \cdot 2^{5^n} \equiv 0 \pmod {10^n}$ by the ring homomorphism $\phi : \mathbb{Z}_{10} \rightarrow \frac{\mathbb{Z}}{10^n \mathbb{Z}}$. Since the sequence $\{5^{2^n }\}_n := {5^{2^0 }},{5^{2^1 }},{5^{2^2 }},\dots$ converges $5$-adically to $0$ and $2$-adically to $1$, and $\{2^{5^n} \}_{\infty}=1-\{5^{2^n }\}_{\infty}$, the above is the unique pair which induces the decomposition of $\mathbb{Z}_{10}$. Thus, $\mathbb{Z}_{10}\cong \mathbb{Z}_5 \oplus \mathbb{Z}_2$ (where $\oplus$ indicates the direct sum) since, for $p$ prime, the complete ring $\mathbb{Z}_p$ contains only the two idempotents elements $0$ and $1$, and the $5$-adically plus $2$-adically convergence implies the $10$-adically convergence (by Cauchy’s convergence criterion). Hence, assume $h(n) \simeq 5^{2^n }$ and $r(n) \simeq 2^{5^n }$ in order to solve the fundamental equation $y^t=y$, introduced by Proposition \ref{prop3}.

Given $s_1=5$, if $h_n=5^{2^n} \pmod {10^n}$, then $\displaystyle\lim_{\infty \leftarrow n} h_n=\dots 92256259918212890625$ \cite{Lubin:4}.

Similarly, for $s_1=2$, $r_n=2^{5^n} \pmod {10^n} \Rightarrow 
\displaystyle\lim_{\infty \leftarrow n}
r_n=\dots804103263499879186432$.

Now, let $\{y_i(t),i=1,2,\dots \}$ be the set of the $i$ solutions in $\mathbb{Z}_{10}$ of $y^t=y$, and also let $\{y_{\hat{i}}(t),\hat{i}=1,2,\dots \}$ be a subset of $\{y_i(t)\}$. If $t=2$, then $\nexists \hat{i} : y_{\hat{i}}(t) \in \{0,1\} \Leftrightarrow y_{\hat{i}}(2) \in \{h,1-h\}$ for any $\hat{i}$, so let $y_1(2)=h$ and $y_2(2)=1-h$.

Following the path above, it is possible to verify that all the solutions of $y^t=y$ belong to the set of the solutions of $y^5=y$ \cite{Michon:6}. Thus, for every given $\hat{i}$ such that $y_{\hat{i}}(t) \notin \{0,1\}$, $y_{\hat{i}}(5) \mapsto a(n)=s_n\_s_{(n-1)}\_\dots\_s_2\_s_1 \Rightarrow V(a(n)) \geq n$. We point out that $s_{n+1}=0 \Rightarrow V(a(n))>n$ and $V(a(n))=n \Rightarrow s_{n+1} \neq 0$, since $s_n\_\dots\_s_2\_s_1 \equiv s_{(n+1)}\_s_n\_\dots\_s_2\_s_1 \pmod {10^n} \wedge s_n\_\dots\_s_2\_s_1 \not\equiv s_{(n+1)}\_s_n\_\dots\_s_2\_s_1 \pmod {10^{n+1}}$ is a necessary condition for $V(a(n))=n$.
In particular, we should note that if $y_{(1,3,4,9,10,12,13,15)}$ (\ref{eq:5}) originates all the pentamorphic integers coprime to $10$ satisfying $y^t=y$ (see Proposition \ref{prop6}, Equation (\ref{eq:2})), then 
$y_{(1,3,4,9,10,12)}(5) \mapsto \left(\pm \left(1-2 \cdot 5^{2^n} \right) \pmod {10^n}, \pm \left( 5^{2^n} - 2^{5^n} \right) \pmod {10^n}, \pm \left( 5^{2^n} + 2^{5^n} \right)\pmod {10^n} \right)$ is enough to find all the smallest bases $\bar{a}_{ \left[ 1,3,7,9 \right]}(n) \leq \tilde{a}_{\left[ 1,3,7,9 \right]}(n)$ characterized by a constant congruence speed which is at least equal to any given strictly positive integer $n$. Hence, considering each of the four mentioned congruence classes modulo $10$, the $\bar{a}_{ \left[ 1,3,7,9 \right]}(n)$  (whose constant congruence speed is $V\left(\bar{a}_{ \left[ 1,3,7,9 \right]}(n)\right) \geq n$) are given by Equation (\ref{eq:1}),

\begin{equation} \label{eq:1}
\bar{a}_{ \left[1,3,7,9 \right]}(n) =
\begin{cases}
\left(1- 2 \cdot 5^{2^n} \right)\hspace{-1mm} \pmod {10^n}  \quad \mathrm{iff}  \hspace{2mm}  a \equiv 1 \pmod {10} \wedge a \neq 1 \\
\min_{n} \left( \left( 5^{2^n} - 2^{5^n} \right)\hspace{-1mm} \pmod {10^n},  -\left( 5^{2^n} + 2^{5^n} \right)\hspace{-1mm} \pmod {10^n} \right) \hspace{2mm} \mathrm{iff} \hspace{2mm}  a \equiv 3\pmod {10}  \\
\min_{n} \left( \left(5^{2^n} + 2^{5^n} \right)\hspace{-1mm} \pmod {10^n}, \left( 2^{5^n} - 5^{2^n} \right)\hspace{-1mm} \pmod {10^n} \right)  \hspace{6mm} \mathrm{iff} \hspace{2mm} a \equiv 7\pmod {10}  \\
 \left(2 \cdot 5^{2^n} - 1 \right)\hspace{-1mm} \pmod {10^n}  \quad  \mathrm{iff} \hspace{2mm}  a \equiv 9\pmod {10}
     \end{cases} \hspace{-5mm}.
\end{equation}

In Equation (\ref{eq:1}), the condition $a \neq 1$ follows from the definition of $V(a)$ itself, which includes $V(1)=0<n$ (for the reasons explained in Reference \cite{Ripa:16}, pages 248–249). Since ${^{b}1}$ is congruent modulo $10^m$ to $^{b+1}1$ for any $m \in \mathbb{N}_0$, the constant congruence speed of $a=1$ is special, and this explains why, in the next proposition, we will exclude $y_{15}(t) : 1^t=1$ from the set of the nontrivial solutions of $y^t=y$.
\end{proposition}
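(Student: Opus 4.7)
The plan is to prove this omnibus statement in three stages: the ring decomposition, the classification of solutions of $y^t=y$, and the enumeration producing Equation (\ref{eq:1}). First I would establish $\mathbb{Z}_{10}\cong\mathbb{Z}_5\oplus\mathbb{Z}_2$ by constructing an explicit central idempotent. The CRT isomorphism $\mathbb{Z}/10^n\mathbb{Z}\cong\mathbb{Z}/5^n\mathbb{Z}\times\mathbb{Z}/2^n\mathbb{Z}$ is compatible with the inverse system defining $\mathbb{Z}_{10}$, so the splitting lifts to the limit. Concretely, $h_n:=5^{2^n}\bmod 10^n$ has $v_5(h_n)=2^n\geq n$, hence $h_n\equiv 0\pmod{5^n}$, while a lifting-the-exponent calculation based on $5\equiv 1\pmod{4}$ gives $v_2(5^{2^n}-1)=n+2$, hence $h_n\equiv 1\pmod{2^n}$. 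Thus $h_n$ is the CRT idempotent with image $(0,1)$ at every finite level, and the sequence assembles into a $10$-adic idempotent $h$; its complement $1-h$ provides the second idempotent. The dual sequence $r_n:=2^{5^n}\bmod 10^n$ converges by the symmetric argument: $v_2(r_n)=5^n\to\infty$ in the $2$-adic factor, and iterated fifth powers stabilize to a Teichmüller lift in the $5$-adic factor. The product identity $h\cdot r\equiv 0\pmod{10^n}$ then follows from $5^{2^n}\cdot 2^{5^n}=10^{2^n}\cdot 2^{5^n-2^n}$, which is divisible by $10^n$ since $2^n\geq n$.

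Second I would classify solutions of $y^t=y$ componentwise. Each $\mathbb{Z}_p$ is a complete local domain whose only idempotents are $0$ and $1$, so every nonzero solution is a unit with $y^{t-1}=1$; by Hensel these are Teichmüller roots of unity, cyclic of order $p-1$, plus the $2$-torsion element $-1$ in $\mathbb{Z}_2$ arising from $\mathbb{Z}_2^\times=\{\pm 1\}\times(1+4\mathbb{Z}_2)$. In $\mathbb{Z}_5$ this yields $\{0,\pm 1,\pm\omega\}$ with $\omega^2=-1$, and in $\mathbb{Z}_2$ it yields $\{0,\pm 1\}$. Every solution in either factor already satisfies $y^5=y$, establishing the containment $\{y^t=y\}\subseteq\{y^5=y\}$, and the direct sum delivers exactly $5\cdot 3=15$ solutions in $\mathbb{Z}_{10}$, matching the indexing $y_1,\dots,y_{15}$ used in the statement.

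Third I would derive Equation (\ref{eq:1}). Under the direct sum, the seven nontrivial units coprime to $10$ correspond to the pairs $(\varepsilon_5,\varepsilon_2)$ with $\varepsilon_5\in\{\pm 1,\pm\omega\}$ and $\varepsilon_2\in\{\pm 1\}$, excluding $(1,1)=1$. Rewriting in terms of $h$ and $r$, six of them appear as $\pm(1-2h)$, $\pm(h-r)$, $\pm(h+r)$, and the seventh is $-1$. Computing final digits shows that $\pm(1-2h)$ populate classes $\{1,9\}$, $\pm(h\pm r)$ populate classes $\{3,7\}$ with two representatives each, and $-1$ adds a second representative to class $9$. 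Reducing modulo $10^n$ and selecting the smallest candidate in each class yields the four branches of Equation (\ref{eq:1}); the $\min_n$ in the middle branches chooses between two pentamorphic residues sharing the same last digit, while the $-1$ candidate in class $9$ is dominated by $2h-1$ at every level and so is never selected.

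The main obstacle will be the bookkeeping in the third step: verifying that the displayed formulas always produce the \emph{smallest} base in each congruence class rather than merely some pentamorphic one. The lower bound $V(\bar a)\geq n$ follows directly from the fixed-point identity $y^5=y$ combined with the convergence of $^b a\pmod{10^d}$ invoked in Proposition~\ref{prop2}, but minimality requires a careful size comparison among the competing pentamorphic residues in each class, and the nonvanishing constraint $s_{n+1}\neq 0$ highlighted at the end of the proposition is needed to separate $V=n$ from $V>n$.
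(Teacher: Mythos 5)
Your proposal is correct and follows essentially the same route as the paper: the idempotent decomposition $\mathbb{Z}_{10}\cong\mathbb{Z}_5\oplus\mathbb{Z}_2$ built from $h=\lim_n 5^{2^n}$, the reduction of $y^t=y$ to $y^5=y$ via the Teichm\"uller roots of unity in each factor (giving the $5\cdot 3=15$ solutions), and the enumeration of the seven nontrivial units coprime to $10$ as $\pm(1-2h)$, $\pm(h\pm r)$, $-1$, which yields the four branches of Equation (\ref{eq:1}). One place where you quietly improve on the text: you treat $r=\lim_n 2^{5^n}$ as the Teichm\"uller lift of $2$ in the $\mathbb{Z}_5$ component (so that $r^2=h-1$), whereas Proposition \ref{prop5} literally asserts $\{2^{5^n}\}_{\infty}=1-\{5^{2^n}\}_{\infty}$, which contradicts the paper's own Equation (\ref{eq:2}) (there $1-h=\alpha'_6=\dots 109376$ while $r=\alpha'_2=\dots 186432$); your reading is the correct one and is the one actually used in the rest of the paper. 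The one substantive step you defer --- that any base in the classes $\{1,3,7,9\}$ with $V(a)\geq n$ must be congruent modulo $10^n$ to one of these pentamorphic roots, which is what makes the $\bar{a}$'s genuine lower bounds for the $\tilde{a}$'s, and that truncation of such a root really forces $V\geq n$ --- is likewise not proved inside Proposition \ref{prop5} itself: the paper leans on the $2$-adic and $5$-adic valuation bounds for $a^2\pm 1$ developed in Section \ref{sec:SUBSECTION 2.2} and on Reference \cite{Ripa:16}, so the ``main obstacle'' you flag is exactly the point the paper outsources rather than settles here.
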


\begin{proposition} \label{prop6}
Let $h(n) \simeq 5^{2^n}$ and $r(n) \simeq 2^{5^n}$, as usual. Assume $t \geq 5$ and let $\{y_i(t), i \in \mathbb{Z}^+ \}$ represent the set of all the solutions in $\mathbb{Z}_{10}$ of the fundamental equation $y^t=y$ (i.e., 
$i \in \{1,2,3,\dots,14,15\}$). Assume that $\alpha'_{s_1} \in \mathbb{Z}_{10}$ and $\alpha''_{s_1} \in \mathbb{Z}_{10}$ (if any) are not equal each other for any $s_1 \in \{1,2,\dots,9\}$ so that we denote with $\{\alpha'_{s_1} \cup \alpha''_{s_1} \} := \{y_{\hat{i}}(t), \hat{i}=1,2,\dots \}$ the subset formed by the $y_i(t)$ which are not congruent modulo $10^2$ to $\{0,1\}$. 
It follows that $\{y_i(5),i=1,\dots,15 \} \supsetneq \{\alpha'_1, \alpha'_2  ,\alpha'_3,\alpha''_3, \alpha'_4, \alpha'_5, \alpha''_5, \alpha'_6, \alpha'_7,\alpha''_7, \alpha'_8, \alpha'_9, \alpha''_9 \}$, since $y_{14} (t) : 0^t=0$ and $y_{15} (t) : 1^t=1$ show the existence of two (trivial) solutions of $y^5=y$ which are not included in the previously mentioned subset. In order to understand how the remaining $y_i(t)$ anticipate the recurrence rules stated in Section \ref{sec:SUBSECTION 2.2}, it can be helpful to preliminary observe that the $y_i(t)$ follow from $\displaystyle\lim_{n\to\infty} 5^{2^n} = \frac{1+\sqrt{1}}{2} \Rightarrow y=\displaystyle\lim_{n\to\infty} 5^{2^n}=\displaystyle\lim_{n\to\infty} 5^{2^{n+1}}=y^2 \Rightarrow y_{j \leq i}(2)=y_{(1,12,14,15)}(t)=\{\alpha'_1,\alpha'_9,0,1\}=\{-\sqrt{1},\sqrt{1},0,1\}$, and we can easily verify that $\alpha'_9= -\alpha'_1=\sqrt{1}=\displaystyle\lim_{n\to\infty} \frac{5^n-2^n}{5^n+2^n}$ \cite{OEIS:7, OEIS:8}. Considering $t=5$, we find in a similar way all the other roots (e.g., see References \cite{OEIS:9, OEIS:10, OEIS:11, OEIS:12} for $\alpha'_3$, $\alpha''_3$, $\alpha'_7$, and $\alpha''_7$), so it is possible to conclude that the $y_{i \leq 13}(t \geq 5)$ are such that $\alpha'_1= -\alpha'_9$, $\alpha'_2=-\alpha'_8$, $\alpha'_3=-\alpha'_7$, $\alpha''_3=-\alpha''_7$, $\alpha'_4=-\alpha'_6$, $\alpha'_5=-\alpha''_5$, and $\alpha''_9=-1$. Furthermore, for any $n$, $r(n)^2+1=h(n) \mapsto 5^{2^n} \equiv \left( \left({2^{5^n}}\right)^2+1 \right) \pmod {10^n}$ if and only if $5^{2^n} \equiv \left(4^{5^n}+1 \right) \pmod {10^n}$.

In general, as clearly explained by Michon in Reference \cite{Michon:6}, we have

\begin{equation} \label{eq:2}
y_{i \leq 13}(t) =
\begin{cases}
\alpha'_1 = 1-2 \cdot h =\dots 538207781991786760045215487480163574218751    \hspace{2.8mm} \textnormal{iff} \hspace{2.8mm} i=1 \\
\alpha'_2 = r =\dots 553032451441224165530407839804103263499879186432    \hspace{2.9mm} \textnormal{iff} \hspace{2.9mm} i=2 \\
\alpha'_3 = h-r =\dots 90779454884838576212137588152996418333704193    \hspace{3.3mm} \textnormal{iff} \hspace{3.3mm} i=3 \\
\alpha''_3 = -h-r =\dots 317662666830362972182803640476581907922943    \hspace{3.6mm} \textnormal{iff} \hspace{3.5mm} i=4 \\
\alpha'_4 = h-1 =\dots 23230896109004106619977392256259918212890624    \hspace{3.0mm} \textnormal{iff} \hspace{3.5mm} i=5 \\
\alpha'_5 = h =\dots 23423230896109004106619977392256259918212890625    \hspace{3.6mm} \textnormal{iff} \hspace{3.6mm} i=6 \\
\alpha''_5 = -h =\dots 6576769103890995893380022607743740081787109375    \hspace{3.0mm} \textnormal{iff} \hspace{2.5mm} i=7 \\
\alpha'_6 = 1-h =\dots 76769103890995893380022607743740081787109376 \hspace{3.3mm} \textnormal{iff} \hspace{3.3mm} i=8 \\
\alpha'_7 = -h+r =\dots 220545115161423787862411847003581666295807    \hspace{3.7mm} \textnormal{iff} \hspace{3.7mm} i=9 \\
\alpha''_7 = h+r =\dots 5682337333169637027817196359523418092077057    \hspace{3.2mm} \textnormal{iff} \hspace{3.2mm} i=10 \\
\alpha'_8 = -r =\dots 967548558775834469592160195896736500120813568    \hspace{3.5mm} \textnormal{iff} \hspace{3.4mm} i=11 \\
\alpha'_9 = 2 \cdot h-1 =\dots 1792218008213239954784512519836425781249    \hspace{3.7mm} \textnormal{iff} \hspace{3.6mm} i=12 \\
\alpha''_9 = -1 =\dots 999999999999999999999999999999999999999999999    \hspace{3.2mm} \textnormal{iff} \hspace{3.1mm} i=13 \\
     \end{cases} \hspace{-5.9mm}.
\end{equation}

Since $\phi : \mathbb{Z}_{10}\rightarrow \frac{\mathbb{Z}}{10^n \mathbb{Z}}$, it follows that $\alpha \mapsto a \pmod {10^n} \Rightarrow V \left(\alpha'_{s_1} \pmod {10^n} \right) \geq n$ and $V\left(\alpha''_{s_1} \pmod {10^n}\right) \geq n$.

More specifically, $\forall n \geq 2, s_{n+1}=0 \Rightarrow \left(V\left(\alpha'_{s_1 \neq 5} \pmod {10^n} \right) \wedge V\left(\alpha''_{s_1 \neq 5} \pmod {10^n} \right)\right) \geq n+1$, while $\left(V\left(\alpha'_5 \pmod {10^n}\right) \wedge V\left(\alpha''_5 \pmod {10^n}\right)\right) \geq n+1$ is true for any $s_{n+1} \in \{0,1,2,3,4,5,6,7,8,9\}$.

In particular, if $\textnormal{gcd}(s_1,10)=1$, then we can easily verify that the relations shown in the next subsection are correct; so, $\forall n \geq 2$, $s_{n+1}\neq 0 \Rightarrow V\left(\alpha'_{(1,3,7,9)} \pmod {10^n} \right)=n$ and also $V\left(\alpha''_{(3,7,9)} \pmod {10^n} \right)=n$.
\end{proposition}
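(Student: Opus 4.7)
The plan is to combine the ring isomorphism $\mathbb{Z}_{10} \cong \mathbb{Z}_2 \oplus \mathbb{Z}_5$ from Proposition \ref{prop5} (with orthogonal idempotents $h \leftrightarrow (1,0)$ and $1-h \leftrightarrow (0,1)$) with a factorwise enumeration of the solutions of $y^5 = y$, and then to translate the pentamorphic identity $\alpha^5 = \alpha$ into the claimed congruence-speed bounds. As a preliminary reduction, for any $t \geq 5$ every solution of $y^t = y$ automatically satisfies $y^5 = y$: if $y \neq 0$ and $y^{t-1} = 1$ then $y$ is a torsion unit of $\mathbb{Z}_{10}^\times$, but the torsion of $\mathbb{Z}_2^\times$ is $\{\pm 1\}$ and that of $\mathbb{Z}_5^\times$ is the group of $4$th roots of unity, so every torsion element has order dividing $4$ and hence $y^4 = 1$. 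It therefore suffices to treat $t = 5$.

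In each factor I enumerate directly. Over $\mathbb{Z}_2$ the factorisation $y^5 - y = y(y-1)(y+1)(y^2+1)$ has only the three roots $\{0, 1, -1\}$, since $-1$ is not a $2$-adic square ($y^2 \equiv 1 \pmod 8$ for every odd $y \in \mathbb{Z}_2$). Over $\mathbb{Z}_5$, $y^5 - y$ splits completely on $\mathbb{F}_5$ by Fermat's little theorem, and each root lifts uniquely by Hensel's lemma (derivative $5y^4 - 1 \equiv -1 \pmod 5$), yielding the five Teichmüller lifts $\{0, 1, \omega, -1, -\omega\}$, where $\omega$ is the lift of $2$ and $\omega^2 = -1$ in $\mathbb{Z}_5$ (from $2^2 \equiv -1 \pmod 5$). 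The Chinese Remainder Theorem then gives the expected $3 \cdot 5 = 15$ solutions in $\mathbb{Z}_{10}$, matching the stated index range. For the explicit formulas in Equation (\ref{eq:2}), note that $r = \lim_{n \to \infty} 2^{5^n}$ has coordinates $(0, \omega) \in \mathbb{Z}_2 \oplus \mathbb{Z}_5$: the $2$-adic part vanishes because $v_2(2^{5^n}) \to \infty$, and the $5$-adic part is the Teichmüller fixed point of $2$. Consequently every solution of $y^5 = y$ has the shape $h \xi_2 + (1-h) \xi_5$ for some $(\xi_2, \xi_5) \in \{0, \pm 1\} \times \{0, 1, -1, \omega, -\omega\}$, i.e., a sum in $\{0, \pm h\} + \{0, \pm (1-h), \pm r\}$; the two trivial pairs $(0,0)$ and $(1,1)$ yield $y_{14} = 0$ and $y_{15} = 1$ (via $h + (1-h) = 1$), while each of the remaining thirteen sums matches exactly one of the listed $\alpha'_{s_1}$, $\alpha''_{s_1}$ once CRT determines its residue modulo $10$. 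The sign-flip $(\xi_2, \xi_5) \mapsto (-\xi_2, -\xi_5)$ reproduces the symmetries $\alpha'_j = -\alpha'_{10-j}$, $\alpha''_3 = -\alpha''_7$, and so on, and the identity $r^2 + 1 = h$ is immediate from $(0, \omega)^2 + (1,1) = (1, \omega^2 + 1) = (1, 0)$.

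For the congruence-speed assertions, I would fix a non-trivial pentamorphic $\alpha$ with $\gcd(s_1, 10) = 1$ and set $a_n := \alpha \bmod 10^n$. Applying the reduction $\phi : \mathbb{Z}_{10} \to \mathbb{Z}/10^n\mathbb{Z}$ to $\alpha^5 = \alpha$ gives $a_n^5 \equiv a_n \pmod{10^n}$, hence $a_n^{5^k} \equiv a_n \pmod{10^n}$ for every $k$. Since the iterated Carmichael reduction ensures that the tower exponent ${}^{b-1}a_n$, taken modulo $\lambda(10^n) = \mathrm{lcm}(2^{n-2}, 4 \cdot 5^{n-1})$, stabilises for large $b$ at a value $x$ with $a_n^x \equiv a_n \pmod{10^n}$, the tower ${}^b a_n$ itself stabilises modulo $10^n$, forcing $V(a_n) \geq n$. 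The refinement $V \geq n+1$ when $s_{n+1} = 0$ is immediate because $\alpha \equiv a_n \pmod{10^{n+1}}$ in that case, so the same argument applies one level higher; for $s_1 = 5$ the bound holds unconditionally because $h$ is $5$-adically zero, trivialising the $5$-adic side of any lift. Conversely, equality $V = n$ for $\alpha'_{(1,3,7,9)}$ and $\alpha''_{(3,7,9)}$ with $s_{n+1} \neq 0$ follows from the recurrences of Section \ref{sec:SUBSECTION 2.2}, which certify that the $(n+1)$-th digit of ${}^b a_n$ genuinely fails to stabilise.

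The main obstacle will be making the propagation step precise: converting $a_n^5 \equiv a_n \pmod{10^n}$ into ${}^b a_n \equiv a_n \pmod{10^n}$ for all large $b$. This requires showing that the stabilised tower exponent lands in the subgroup of $(\mathbb{Z}/10^n\mathbb{Z})^\times$ on which exponentiation fixes $a_n$, and this in turn demands a careful joint analysis of the two Carmichael factors $\lambda(2^n) = 2^{n-2}$ and $\lambda(5^n) = 4 \cdot 5^{n-1}$ across repeated exponentiations. The explicit $(h, r)$-calculus makes the computation tractable, but the bookkeeping of $4$th roots of unity and $5$-adic idempotency across iterated Carmichael levels is where the technical work actually happens.
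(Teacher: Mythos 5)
Your classification of the fifteen solutions of $y^5=y$ is sound and, in fact, better grounded than the paper's own treatment: the paper essentially outsources the enumeration to Michon \cite{Michon:6} and to the $h$--$r$ calculus of Proposition \ref{prop5}, whereas you derive it from scratch via the splitting $\mathbb{Z}_{10}\cong\mathbb{Z}_2\oplus\mathbb{Z}_5$, Hensel/Teichm\"uller lifting in $\mathbb{Z}_5$, and the fact that the torsion of $\mathbb{Z}_2^\times\times\mathbb{Z}_5^\times$ has exponent $4$. One small caveat: your preliminary reduction ``$y\neq 0$ and $y^{t-1}=1$'' silently assumes $y$ is $0$ or a unit, which misses the zero-divisor solutions ($h$, $r$, $h-r$, \dots); the reduction is still valid, but only because you pass to the two factors, each of which \emph{is} an integral domain, and argue coordinatewise. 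The identities $r^2+1=h$, the sign symmetries, and the count $3\cdot 5=15$ all check out.

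The genuine gap is in the second half, and you have named it yourself: nothing in the proposal actually converts $a_n^5\equiv a_n\pmod{10^n}$ into $V(a_n)\geq n$. Your proposed route --- stabilising the tower exponent modulo $\lambda(10^n)$ and showing it lands in the stabiliser of $a_n$ --- would amount to re-proving the main results of \cite{Ripa:16} from first principles, and you do not carry out the Carmichael bookkeeping that this requires. The paper does not take that road at all: it leans (implicitly, via Proposition \ref{prop5} and Section \ref{sec:SUBSECTION 2.2}) on the already-established valuation characterisation of the congruence speed, e.g.\ $V\left(a_{\{1,9\}}\right)\leq\min\left(v_5\left(a^2-1\right),v_2\left(a^2-1\right)\right)$ and its analogues for the other residue classes. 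With that in hand the bound is one line: $\alpha^4=1$ in $\mathbb{Z}_{10}$ gives $\left(a_n^2-1\right)\left(a_n^2+1\right)\equiv 0\pmod{10^n}$, and since $a_n^2-1$ and $a_n^2+1$ differ by $2$, the relevant $5$-adic and $2$-adic valuations are each at least $n$, forcing $V(a_n)\geq n$; the equality $V(a_n)=n$ when $s_{n+1}\neq 0$ then follows because the truncation at level $n$ caps those valuations at exactly $n$. Your appeal to ``the recurrences of Section \ref{sec:SUBSECTION 2.2}'' for the equality case is also circular as written, since those recurrences are themselves justified by this proposition. To close the argument you should either import the valuation formula from \cite{Ripa:16} explicitly, or actually execute the two-factor Carmichael analysis you sketch --- as it stands, the congruence-speed assertions remain unproved.
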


\begin{proposition} \label{prop7}
Let $\alpha'_{s_1}(n) := \alpha'_{s_1} \pmod {10^n}$. Let us consider only the even values of $s_1$ so that $\hat{s}_1 \in \{2,4,6,8\}$. Since $V\left(\alpha'_{s_1} (n)\right) \geq n$ for any $n \in \mathbb{N}-\{0\}$, we only need to compute the residues modulo $2 \cdot 5^n$ of $\alpha'_{\hat{s}_1}$ (observing that $(2 \cdot 5^n)  | \textit{P} \left(V \left(\alpha'_{\hat{s}_1}(n) \right)\right)$ for any $n>1$, see \cite{Ripa:16}, Section 5) in order to find many of the bases $\tilde{a}_{\hat{s}_1}(n)$ that are characterized by a constant congruence speed of $n$ (e.g., if $\hat{{s}_1}=2$ and $n=4$, then $V \left( \alpha'_2(4) \right)=V(6432)=4$, and $V \left(6432 \pmod {2 \cdot 5^4} \right)=V(182)=4=V \left( \tilde{a}_2(4) \right) \Rightarrow \tilde{a}_2(4)=182$). In general, we have that $V \left( \alpha'_{\hat{s}_1}(n) \pmod{2 \cdot 5^n} \right) \geq n$ (e.g., $V \left( \alpha'_2(14) \pmod{2 \cdot 5^n} \right)=15$), and $\alpha'_{\hat{s}_1}(n) \pmod{2 \cdot 5^n}$ always returns the smallest base (congruent modulo $10$ to $\hat{{s}_1}$) which is characterized by a constant congruence speed equal or greater than $n$. Since we are interested in $V \left(\tilde{a}_{\hat{s}_1}(n)\right)=n$ without any exception, we find every $\tilde{a}_{[2,4,6,8]}(n)$ by adding, if necessary, $2 \cdot 5^n$ to $\alpha'_{\hat{s}_1}(n) \pmod {2 \cdot 5^n}$ (e.g., $V \left( \alpha'_8(9)\right)=V(120813568)=9$, and $120813568 \equiv 3626068 \pmod{2 \cdot 5^9}$ would suggest that $\alpha'_8(9) \pmod {2 \cdot 5^9}$ is equal to $3626068$, but clearly $V(3626068)=9+1$) so that $\alpha'_{\hat{s}_1} (n) \pmod {2 \cdot 5^n}+\lambda_{\hat{s}_1} (n) \cdot 2 \cdot 5^n=\tilde{a}_{\hat{s}_1} (n)$ still holds for one $\lambda_{\hat{s}_1} (n) := \lambda\left(\hat{s}_1,n\right) \in \{0,1\}$ (in the two previous examples we verify that $\lambda=1$ holds because $(\alpha'_2 (14) \pmod {2 \cdot 5^n}+1 \cdot 2 \cdot 5^{14}=23316686432 = \tilde{a}_2 (14)>\tilde{a}_2 (15)=\alpha'_2 (14)\pmod {2 \cdot 5^9}$, and also $\alpha'_8 (9) \pmod {2 \cdot 5^n}+1 \cdot 2 \cdot 5^9=7532318= \tilde{a}_8 (9)>\tilde{a}_8 (10)=\alpha'_8 (9) \pmod {2 \cdot 5^9}$). 

In particular, if $\hat{{s}_1} \in \{4,6\}$, then $\tilde{a}_4 (n)=5^n-1 \wedge \tilde{a}_6 (n)=5^n+1$ follows by construction (see $y_{(5,8)} (5)$ by Equation (\ref{eq:2})). Trivially, for any $n$, $5^n-1 \equiv (5^n-1) \pmod{ 2 \cdot 5^n}$ and also $5^n+1 \equiv (5^n+1) \pmod{2 \cdot 5^n}$; thus, $\hat{s}=(4 \vee 6) \Rightarrow \lambda_{(4,6)}=0$ for any positive integer $n$.

Finally, we have that $\lambda_{\hat{s}_1} (n)=1$ if and only if $\hat{{s}_1}=(2 \vee 8) \wedge \alpha'_{\hat{s}_1} (n) \pmod{2 \cdot 5^n}=\alpha'_{\hat{s}_1} (n+1) \pmod{2 \cdot 5^{n+1}}$, while $\lambda=0$ otherwise.

This concludes the proof that, for any $n \geq 1$ and each $\hat{{s}_1} \in \{2,4,6,8\}$, $\exists k(\hat{s}_1,n) \in \mathbb{N}_0 : \alpha'_{\hat{s}_1}(n)-k \cdot 2 \cdot 5^n=\tilde{a}_{\hat{s}_1} (n)$.
\end{proposition}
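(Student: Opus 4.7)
The plan is to combine Proposition~\ref{prop6}, which guarantees $V(\alpha'_{\hat{s}_1}(n))\geq n$, with the divisibility fact $2\cdot 5^n \mid P(V(\alpha'_{\hat{s}_1}(n)))$ cited from \cite{Ripa:16}, in order to reduce the search for $\tilde{a}_{\hat{s}_1}(n)$ from the interval $[1,10^n)$ down to a window of length $2\cdot 5^n$. The underlying intuition is that the $2$-adic and $5$-adic components of an even base behave asymmetrically under tetration: the $2$-adic valuation of ${}^{b}a-{}^{b+1}a$ grows much faster than the $5$-adic one, so the congruence speed of $a$ at order $n$ is controlled by the residue $a\pmod{2\cdot 5^n}$ rather than by $a\pmod{10^n}$.

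First I would translate the cited divisibility into the closure statement: if $a,a'\equiv\hat{s}_1\pmod{10}$ and $a\equiv a'\pmod{2\cdot 5^n}$, then $V(a)\geq n \Longleftrightarrow V(a')\geq n$. Applying this to $\alpha'_{\hat{s}_1}(n)$ shows that $\beta:=\alpha'_{\hat{s}_1}(n)\pmod{2\cdot 5^n}$ already realises a base of last digit $\hat{s}_1$ with $V(\beta)\geq n$, and that $\beta$ is the smallest such positive integer in its residue class modulo $2\cdot 5^n$. The two special classes $\hat{s}_1\in\{4,6\}$ are then disposed of by direct inspection: Equation~(\ref{eq:2}) gives $\alpha'_5=h$ and $\alpha'_6=1-h$, whose reductions modulo $2\cdot 5^n$ are exactly $5^n-1$ and $5^n+1$, so $k=0$ and the statement holds with $\lambda_{(4,6)}(n)=0$.

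The main obstacle is the possibility that $V(\beta)>n$, in which case $\beta$ coincides with $\tilde{a}_{\hat{s}_1}(n-1)$ rather than with $\tilde{a}_{\hat{s}_1}(n)$, and a correction is required. I would handle this by proving the sharp criterion $V(\beta)\geq n+1 \Longleftrightarrow \alpha'_{\hat{s}_1}(n)\pmod{2\cdot 5^n}=\alpha'_{\hat{s}_1}(n+1)\pmod{2\cdot 5^{n+1}}$, which matches the $\hat{s}_1\in\{2,8\}$ flag written into the statement. In that situation one replaces $\beta$ by $\beta+2\cdot 5^n$: the closure from the previous step still forces $V\geq n$, while adding $2\cdot 5^n$ shifts the residue modulo $2\cdot 5^{n+1}$ by a non-zero amount, so the new base cannot satisfy $V\geq n+1$; hence $V(\beta+2\cdot 5^n)=n$ exactly and $\lambda_{(2,8)}(n)\in\{0,1\}$. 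Rewriting the whole construction as a subtraction from $\alpha'_{\hat{s}_1}(n)$ (not from its reduction) absorbs both alternatives into a single non-negative integer $k=k(\hat{s}_1,n)$ with $\alpha'_{\hat{s}_1}(n)-k\cdot 2\cdot 5^n=\tilde{a}_{\hat{s}_1}(n)$, which is the claim. Throughout, the delicate point is the passage from $V(\alpha'_{\hat{s}_1}(n))\geq n$ to the equality $V(\tilde{a}_{\hat{s}_1}(n))=n$; the remainder is bookkeeping on residues modulo $2\cdot 5^n$.
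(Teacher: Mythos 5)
Your proposal follows essentially the same route as the paper's own embedded argument: reduce $\alpha'_{\hat{s}_1}(n)$ modulo $2\cdot 5^n$ via the period divisibility $(2\cdot 5^n)\mid \textit{P}\left(V\left(\alpha'_{\hat{s}_1}(n)\right)\right)$, dispose of $\hat{s}_1\in\{4,6\}$ by direct computation, and correct the $\hat{s}_1\in\{2,8\}$ cases with the flag $\lambda\in\{0,1\}$ detected through the coincidence of consecutive reductions. Only two indexing slips should be fixed: the $\hat{s}_1=4$ case comes from $\alpha'_4=h-1$ (not $\alpha'_5=h$), and when $V(\beta)>n$ the reduction $\beta$ coincides with $\tilde{a}_{\hat{s}_1}(n+1)$ rather than $\tilde{a}_{\hat{s}_1}(n-1)$, as the paper's example $\tilde{a}_8(10)=\alpha'_8(9)\bmod\left(2\cdot 5^9\right)$ shows.
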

Lastly, we can find bases congruent to $5$ modulo $10$ that are smaller than $\min_{n} \left(\alpha'_5(n), \alpha''_5 (n) \right)$ and whose constant congruence speed is at least equal to $n$, by simply taking into account that $\textit{P}' \left(\alpha'_5 (n) \right)=\textit{P}' \left(\alpha''_5 (n) \right)=5 \cdot 2^{n+1}$ (see \cite{Ripa:16}, Section 5) and introducing the additional condition $n>2$.

Thus,
\begin{equation} \label{eq:3}
V \left(\alpha'_5 (n) \pmod {10 \cdot 2^n }\right) \geq n  \wedge  V\left(\alpha''_5 (n)\pmod {10 \cdot 2^n} \right) \geq n,
\end{equation}
and Equation (\ref{eq:3}) let us confirm the validity of Equation (\ref{eq:5}) (e.g., if $n=20$, then $\alpha'_5 (20)=92256259918212890625$ is congruent modulo $10 \cdot 2^{20}$ to $9437185$ and $V(9437185)=20$, while $V \left(\alpha''_5 (20) \pmod {10 \cdot 2^{20}}\right)=V(6291455)=21>n$).

\subsection{Main result} \label{sec:SUBSECTION 2.2}

We show that Equation (\ref{eq:4}) is true for any $n \geq 2$ (i.e., $n \geq 2 \Rightarrow \tilde{a}_5 (n)=\tilde{a}(n)$, see Definition \ref{def1.2}).

\begin{equation} \label{eq:4}
\begin{split}
\tilde{a}(n)=\min_{n}\Biggl(2^n \cdot \left(2 \cdot \cos\left( \frac{\pi \cdot (n-1)}{2} \right) -4 \cdot \sin \left( \frac{\pi \cdot (n-1)}{2} \right) +5 \right)+1,  \\   2^n \cdot \left(4 \cdot \sin \left( \frac{\pi \cdot (n-1)}{2} \right) -2 \cdot \cos \left( \frac{\pi \cdot (n-1)}{2} \right) +5 \right)-1 \Biggr).
\end{split}
\end{equation}

Hence,
\begin{equation} \label{eq:5}
\tilde{a}(n)= \begin{cases}
2^n \cdot \left(5+2 \cdot \sin \left( \frac{\pi \cdot n}{2} \right)+4 \cdot \cos \left(\frac{\pi \cdot n}{2} \right) \right)+1   \quad \textnormal{iff} \quad  n \equiv \{2,3\}\pmod{4} \\
2^n \cdot \left(5-2 \cdot \sin \left(\frac{\pi \cdot n}{2} \right)-4 \cdot \cos \left(\frac{\pi \cdot n}{2} \right) \right)-1    \quad \textnormal{iff} \quad n \equiv \{0,1\}\pmod{4}
    \end{cases}\,.
\end{equation}

Now, assume $b>a \geq 2$ (as usual), even if for any $a \equiv \{1,2,4,5,6,8,9\} \pmod {10}$ we are persuaded that $b \geq \textnormal{len}(a)+2$ represents a sufficient condition for $V(a,b)=V(a)$, as predicted by Conjecture \ref{conj1} \cite{Germain:2, Urroz:19}. 
Then, for any given $n \in \mathbb{N}-\{0,1\}$, $V\left(a_{\{s_1\}}(n) \right)=n$, $\forall s_1 \in \{1,2,3,4,5,6,7,8,9\}$, if and only if Equations (\ref{eq:6}), (\ref{eq:7}), (\ref{eq:8}), (\ref{eq:10}), (\ref{eq:11}), (\ref{eq:14}), (\ref{eq:15}), (\ref{eq:16}), and (\ref{eq:17}) are satisfied.

\begin{equation} \label{eq:6}
\begin{split}
\textnormal{A}_1(n)= \begin{cases}
\left(2^{4 \cdot 5^n+1}-1 \right) \pmod{10^n}+j_n \cdot 10^n, 	\\
\forall j_n \not\equiv \frac{\left(2^{4 \cdot 5^{n+1}+1}-1 \right) \pmod{10^{n+1}} - \left( 2^{4 \cdot 5^{n}+1}-1 \right) \pmod{10^n}}{10^n} \pmod{10} \\
\\
10^n+1+k \cdot 10^n, \forall k \equiv \{0,1,2,3,4,5,6,7,8\}\pmod{10}
    \end{cases}\,.
    \end{split}
\end{equation}

With reference to Equation (\ref{eq:6}), we observe that the previously stated condition 
$n \in \mathbb{N}-\{0,1\}$ assures $n \neq 1$ so that we have excluded a priori the possibility that
$\left(2^{4 \cdot 5^{1}+1}-1 	\right) \pmod{10^1}+0 \cdot 10^n=1>0$ gives a contradiction, inasmuch as $V(1)=0$ by definition.

Since $(k+1) \cdot 10^n+1>\left(2^{4 \cdot 5^{n}+1}-1 \right) \pmod{10^n}$ is always true, Equation (\ref{eq:6}) implies that if $n : \left(2^{4 \cdot 5^{n}+1}-1 \right) \pmod{10^n} \not\equiv \left(2^{4 \cdot 5^{n+1}+1}-1 \right) \pmod{10^{n+1}}$, then $\exists! a_1(n) \leq \left(2^{4 \cdot 5^{n}+1}-1 \right) \pmod{10^n}$. Thus, if the $(n+1)$-th rightmost digit of $\alpha'_1$ (see Equation (\ref{eq:2})) is nonzero, then the unique base $a_1(n) \leq \left(2^{4 \cdot 5^{n}+1}-1 \right) \pmod{10^n}$ corresponds to the desired $\tilde{a}_1 (n)$.

In general (as introduced in Proposition \ref{prop6}), $V \left(a_{\{1,9\}}  \right) \leq \min{(\acute{n}, \grave{n})}$, where $\acute{n} : 5^{\acute{n}} \hspace{1mm} | $ \linebreak $\left(a_{\{1,9\}}^2-1 \right) \wedge 5^{\acute{n}+1} \nmid \left(a_{\{1,9\}}^2-1 \right)$, and $\grave{n} : 2^{\grave{n}} | \left(a_{\{1,9\}}^2-1 \right) \wedge 2^{\grave{n}+1} \nmid \left(a_{\{1,9\}}^2-1 \right)$ (i.e., $\acute{n} \neq 0$ is equal to the $5$-adic valuation of $\left(a_{\{1,9\}}^2-1 \right)$, while $\grave{n} \neq 0$ indicates the $2$-adic valuation of $\left(a_{\{1,9\}}^2-1 \right)$).

It follows that, $\forall n \geq 2$, $10^n+1 \geq \tilde{a}_1(n)> \sqrt{5^n+1}$ (since $5^n+1$ is even).

Similarly to Equation (\ref{eq:6}), if $s_1=9$, we have

\begin{equation} \label{eq:7}
\begin{split}
\textnormal{A}_9(n) = \begin{cases}
\left(2 \cdot {5^{2^n}}-1 \right) \pmod{10^n}+j_n \cdot 10^n, 	\\
\forall j_n \not\equiv \frac{\left(2 \cdot {5^{2^{n+1}}}-1 \right)  \pmod{10^{n+1}} - \left(2 \cdot {5^{2^n}}-1 \right) \pmod{10^n}}{10^n} \pmod{10} \\
\\
10^n-1+k \cdot 10^n, \forall k \equiv \{0,1,2,3,4,5,6,7,8\}\pmod{10}
    \end{cases}\,.
    \end{split}
\end{equation}

As previously shown, if $n : \left(2 \cdot 5^{2^n}-1 \right) \pmod{10^n} \not\equiv \left(2 \cdot 5^{2^{n+1}}-1 \right) \pmod{10^{n+1}}$, then 
$\exists! a_9(n) \leq \left(2 \cdot 5^{2^n}-1 \right) \pmod {10^n }$. In general, $V \left(a_{\{1,9\}} \right) \leq \min{(\acute{n}, \grave{n})}$ and Equation (\ref{eq:7}) imply that $10^n> \tilde{a}_9 (n)> \sqrt{5^n+1}$.

We point out that, as a consequence of Proposition \ref{prop6} (see the case $s_{n+1}=0$),

$$n : \frac{\left(2^{4 \cdot 5^{n+1}+1}-1 \right) \pmod{10^{n+1}}-\left(2^{4 \cdot 5^n+1}-1 \right) \pmod{10^n}}{10^n} \equiv 0 \pmod{10}$$

\noindent $\Rightarrow \left(2^{4 \cdot 5^n+1}-1 \right) \equiv \left(2^{4 \cdot 5^{n+1}+1}-1 \right) \pmod{10^{n+1}} \Rightarrow V\left( \left(2^{4 \cdot 5^{n+1}+1}-1 \right) \pmod{10^n} \right)>n$,
and similarly
$$n : \left(2 \cdot 5^{2^{n+1}}-1 \right) \pmod{10^{n+1}}-\left(2 \cdot 5^{2^n}-1 \right) \pmod{10^n} \equiv 0 \pmod{10^{n+1}}$$
$\Rightarrow \left(2 \cdot 5^{2^n}-1 \right) \equiv \left(2 \cdot 5^{2^{n+1}}-1 \right) \pmod{10^{n+1}} \Rightarrow V\left( \left({2 \cdot 5^{2^n}}-1 \right) \pmod{10^n} \right) > n$ \linebreak
(e.g., $V(163574218751)=V\left(\left(2^{4 \cdot 5^{12}+1}-1 \right) \pmod{10^{12}} \right)=13$).

\vspace{2mm}

Let us consider the case $s_1=5$. From \cite{Ripa:16}, we know that, $\forall n \in \mathbb{N}-\{0,1\}$,
\begin{equation} \label{eq:8}
\textnormal{A}_5(n)= \begin{cases}
2^n \cdot \left(5+2 \cdot \sin \left( \frac{\pi \cdot n}{2} \right)+4 \cdot \cos \left(\frac{\pi \cdot n}{2} \right) \right)+1 + k \cdot 5 \cdot 2^{n+1}, \forall k \in \mathbb{N}_0 \\
2^n \cdot \left(5-2 \cdot \sin \left(\frac{\pi \cdot n}{2} \right)-4 \cdot \cos \left(\frac{\pi \cdot n}{2} \right) \right)-1    + k \cdot 5 \cdot 2^{n+1}, \forall k \in \mathbb{N}_0 
    \end{cases}\,.
\end{equation}
Equation (\ref{eq:8}) implies that
\begin{equation} \label{eq:9}
\tilde{a}_5(n) \leq 9 \cdot 2^n + 1,
\end{equation}
\sloppy and the last inequality (trivially) holds because, $\forall n \in \mathbb{N}$, $$\max{\left(\pm x \cdot \cos\left(\frac{\pi}{2} \cdot n \right) \pm y \cdot \sin\left(\frac{\pi}{2} \cdot n \right)    \right)} = \max{(\lvert x \rvert, \lvert y \rvert)}.$$

If $s_1=4$ or $s_1=6$, for the reasons already discussed in the previous subsection, we have, respectively,
\begin{equation} \label{eq:10}
\textnormal{A}_4(n)=5^n - 1 + k \cdot 2 \cdot 5^n, \forall k \equiv \{0, 1, 3, 4\} \pmod {5};
\end{equation}
\begin{equation} \label{eq:11}
\textnormal{A}_6(n)=5^n + 1 + k \cdot 2 \cdot 5^n, \forall k \equiv \{0, 1, 3, 4\} \pmod {5}.
\end{equation}

Equations (\ref{eq:10}) and (\ref{eq:11}) imply that, $\forall n$, $a_{\{4\}}(n)=a_{\{6\}}(n)-2$.

Thus, $\min_n {\left(\tilde{a}_4 (n),\tilde{a}_6 (n) \right)}=\tilde{a}_4 (n)=5^n-1$.

Now, we study the cases $s_1=2$ and $s_1=8$. In general, $V \left(a_{\{2,8\}} \right)$ is less than or equal to $\hat{n}$, the $5$-adic valuation of $\left(a_{\{2,8\}}^2+1 \right)$, and in particular we have
\begin{equation} \label{eq:12}
a_{\{2,8\}}(\hat{n})=\sqrt{5^{\hat{n}} \cdot c_{a_{\{2,8\}}}(\hat{n})-1}=\sqrt{5^{\hat{n}-n} \cdot 5^n \cdot c_{a_{\{2,8\}}}(\hat{n})-1} \Rightarrow V\left(a_{\{2,8\}}(n)\right) = n \leq \hat{n}.
\end{equation}

Since $c_{a_{\{2,8\}}}(\hat{n})$ for any $\hat{n}$, Equation (\ref{eq:12}) states that $\min_{n}{\left( \tilde{a}_2(n), \tilde{a}_8(n) \right)} \geq \sqrt{5^n-1}$.

More specifically, picking any value of $\hat{n}$, the constraint that $c_{a_{\{2,8\}}}=\frac{a_{\{2,8\}}^2+1}{5^{\hat{n}}}$ have to be solved for $c_{a_{\{2,8\}}}$ over the integers (as $a$) let us calculate the solutions (taking the natural logarithm) from
\begin{equation} \label{eq:13}
\hat{n}=\frac{\ln \left( \frac{a_{\{2,8\}}^2+1}{c_{a_{\{2,8\}}}}\right)}{\ln(5)};
\end{equation}
as a random example, we can see that $\hat{n}=20 \Rightarrow \frac{a_{\{2,8\}}^2+1}{5^{20}}=c \in \mathbb{N} \Rightarrow a_{\{2,8\}}(20)=\left(5^{20} \cdot 2 \cdot m+15613890344818 \right) \vee \left(5^{20} \cdot (2 \cdot m+1)+79753541295807 \right)$, where $m \in \mathbb{N}_{0}$. Hence, $m=0 \Rightarrow \tilde{a}_8(20) = 15613890344818$, $\tilde{a}_2(20) = 5^{20} \cdot 1+79753541295807 = 175120972936432$, (since $5^{20} \cdot d+79753541295807$ is odd for any even value of $d$, including zero, while $5^{20} \cdot d+15613890344818$ is odd if and only if $d$ assumes an odd value, and vice versa), and this is enough to conclude that $\tilde{a}_{[2,8]}(20)=\{175120972936432,15613890344818\} \Rightarrow V\left(\tilde{a}_{[2,8]} (20)\right)=20 \Rightarrow V\left(5^{20} \cdot (2 \cdot m+1)+79753541295807\right) \geq 20$ and so is $V\left(5^{20} \cdot 2 \cdot m+15613890344818\right)$ (the last inequality can be proved by observing that $\left(\hat{n}=20,s_1=2,m=1 \right) \Rightarrow a_2 \left(\hat{n}=20,m=1 \right)=\left(5^{20} \cdot (2 \cdot 1+1)+79753541295807 \right)=365855836217682=\tilde{a}_2 (21) \Rightarrow V\left(a_2(20,1)\right)=V\left(a_2 (21,0)\right)=V\left(476837158203125 \cdot 0+365855836217682 \right)=V\left(\tilde{a}_2 (21)\right)=21>20=V\left(\tilde{a}_2 (20)\right)$; ditto for $s_1=8$).

Equation (\ref{eq:13}) provides also a valid upper bound for the constant congruence speed of every element of $\{\textnormal{A}_3 \cup \textnormal{A}_7 \}$, since $a_{\{2,3,7,8\}}^2+1=\prod_{p_j \neq 5}p_j^{q_j} \cdot 5^{\hat{n}} \geq \prod_{p_j \neq 5}p_j^{q_j} \cdot 5^{V\left(a_{\{2,3,7,8\}} \right)} $ (where $p_j$ represents any prime divisor of $a_{\{2,3,7,8\}}^2+1$ which is not equal to $5$, while $q$ indicates how many times the corresponding $p$ appears in the factorization of $a_{\{2,3,7,8\}}^2+1$ \cite{Urroz:19}).

Furthermore, $V\left(a_{\{3,7\}}\right) \leq \min(\hat{n}, \grave{n})$, where $\hat{n} : 5^{\hat{n}} | \left(a_{\{3,7\}}^2+1\right) \wedge 5^{\hat{n}+1} \nmid \left(a_{\{3,7\}}^2+1 \right)$ and $\grave{n} : 2^{\grave{n}} | \left(a_{\{3,7\}}^2-1\right) \wedge 2^{\grave{n}+1} \nmid \left(a_{\{3,7\}}^2-1\right)$. It follows that, for any (strictly) positive integer $n$, $\min_{n}{\left(\tilde{a}_3(n), \tilde{a}_7(n)\right)}>\sqrt{5^n-1}$ (since $5^n-1$ is even).

As shown in Section \ref{sec:SUBSECTION 2.1}, we can easily improve the above upper bound by taking advantage of the commutative ring of $10$-adic integers, giving an explicit formula for $V\left(a_{\{3,7\}}\right)=n$ in the same way as we have already done for $V\left(a_{\{1,9\}}\right)$. For this purpose, let $V\left(a_{\{3,7\}}\right)=n \leq \hat{n}$.

Since $\alpha'_7=h-r=-\alpha'_3$ and $\alpha''_7=h+r=-\alpha''_3$ (where $h(n) \simeq 5^{2^n}$ and $r(n) \simeq 2^{5^n}$), if $s_1=3$, then
\begin{equation} \label{eq:14}
\begin{split}
\textnormal{A}_3(n) = \begin{cases}
\left(5^{2^n}-2^{5^n} \right) \pmod{10^n}+j_n \cdot 10^n, 	\\
\forall j_n \not\equiv \frac{\left({5^{2^{n+1}}}-{2^{5^{n+1}}} \right)  \pmod{10^{n+1}} - \left({5^{2^n}}-2^{5^n} \right) \pmod{10^n}}{10^n} \pmod{10} \\
\\
\left(-5^{2^n}-2^{5^n} \right) \pmod{10^n}+j_n \cdot 10^n, 	\\
\forall j_n \not\equiv \frac{\left({-5^{2^{n+1}}}-{2^{5^{n+1}}} \right)  \pmod{10^{n+1}} - \left({-5^{2^n}}-2^{5^n} \right) \pmod{10^n}}{10^n} \pmod{10}
    \end{cases}\,,
    \end{split}
\end{equation}
while the case $s_1=7$ is covered by Equation (\ref{eq:15})
\begin{equation} \label{eq:15}
\begin{split}
\textnormal{A}_7(n) = \begin{cases}
\left(2^{5^n}-5^{2^n} \right) \pmod{10^n}+j_n \cdot 10^n, 	\\
\forall j_n \not\equiv \frac{\left({2^{5^{n+1}}}-{5^{2^{n+1}}} \right)  \pmod{10^{n+1}} - \left({2^{5^n}}-5^{2^n} \right) \pmod{10^n}}{10^n} \pmod{10} \\
\\
\left(5^{2^n}+2^{5^n} \right) \pmod{10^n}+j_n \cdot 10^n, 	\\
\forall j_n \not\equiv \frac{\left({5^{2^{n+1}}}+{2^{5^{n+1}}} \right)  \pmod{10^{n+1}} - \left({5^{2^n}}+2^{5^n} \right) \pmod{10^n}}{10^n} \pmod{10}
    \end{cases}\,.
    \end{split}
\end{equation}

In order to complete the (constant) congruence speed map, we only need a formula for $\textnormal{A}_2(n)$ and $\textnormal{A}_8(n)$, as shown by Equations (\ref{eq:6}), (\ref{eq:7}), (\ref{eq:8}), (\ref{eq:10}), (\ref{eq:11}), (\ref{eq:14}), and (\ref{eq:15}).

From Proposition \ref{prop7}, we know that $\alpha'_{(2,8)}(n) :=\alpha'_{(2,8)} \pmod {10^n}$ implies
$\alpha'_2(n)+\alpha'_8 (n)=10^n, \alpha'_2 (n) \pmod{2 \cdot 5^n}+\alpha'_8 (n)\pmod{2 \cdot 5^n}=2 \cdot 5^n$, and $\tilde{a}_{(2,8)}(n) = \alpha'_{(2,8)}(n) \pmod{2 \cdot 5^n}+\lambda_{(2,8)}(n) \cdot 2 \cdot 5^n$, where \vspace{-1mm}$$\lambda_{(2,8)}(n) = \begin{cases}
0 \quad \textnormal{iff} \quad \alpha'_{(2,8)}(n) \pmod{2 \cdot 5^n} \neq \alpha'_{(2,8)}(n + 1) \pmod{2 \cdot 5^{n+1}} 	\\
1 \quad \textnormal{iff} \quad \alpha'_{(2,8)}(n) \pmod{2 \cdot 5^n} = \alpha'_{(2,8)}(n + 1) \pmod{2 \cdot 5^{n+1}} 
    \end{cases}\, \hspace{-4mm}.$$

\begin{equation} \label{eq:16}
\textnormal{A}_2(n)=\tilde{a}_2(n)+k \cdot 2 \cdot 5^n, \forall k \not\equiv \frac{\tilde{a}_2(n+1) - \tilde{a}_2(n) }{2 \cdot 5^n}\pmod {5},
\end{equation}

\begin{equation} \label{eq:17}
\textnormal{A}_8(n)=\tilde{a}_8(n)+k \cdot 2 \cdot 5^n, \forall k \not\equiv \frac{\tilde{a}_8(n+1) - \tilde{a}_8(n) }{2 \cdot 5^n}\pmod {5},
\end{equation}
where, as usual, $\tilde{a}_2 (n)=\left(2^{5^n} \pmod{10^n}\right)\pmod{2 \cdot 5^n}+\lambda_2(n) \cdot 2 \cdot 5^n$ and $\tilde{a}_8(n)=\left(-2^{5^n} \pmod{10^n} \right)\pmod{2 \cdot 5^n}+\lambda_8(n) \cdot 2 \cdot 5^n$.

Hence, $\tilde{a}_2 (n)+\tilde{a}_8 (n)=\alpha'_2 (n)\pmod{2 \cdot 5^n }+\alpha'_8 (n)\pmod{2 \cdot 5^n}+2 \cdot 5^n \cdot (\lambda_2 (n)+\lambda_8 (n))$.

Since $\alpha'_2(n) \pmod{2 \cdot 5^n}+\alpha'_8(n) \pmod{2 \cdot 5^n}=2 \cdot 5^n$, for any $n$, we have shown that $\tilde{a}_2(n)=2 \cdot 5^n \cdot (1+\lambda_2(n)+\lambda_8 (n))-\tilde{a}_8(n)$, where $(\lambda_2(n)+\lambda_8(n)) \in \{0,1,2\}$.

\vspace{4mm}
In conclusion, if $V(a)=1$, then \vspace{-2mm}
\begin{equation} \label{eq:18}
a(1) \equiv \{2,3,4,6,8,9,11,12,13,14,16,17,19,21,22,23\}\pmod{25}.
\end{equation}

Therefore, we have mapped all the bases $a$ such that $V(a)=n$.

\vspace{2mm}
The constant congruence speed formula that we have shown in the present section confirms also Hypothesis 1 and Hypothesis 2, stated in Reference \cite{Ripa:16}, as $V(a) \geq 2 \Rightarrow \textit{P} (V(a))=10^{V(a)+1}$ (see \cite{Ripa:15}, Equation (10)).

Now, we are finally ready to prove that $n \geq 2 \Rightarrow \tilde{a}(n)=\tilde{a}_5(n)=2^n \cdot \left((-1)^{n-1}+2 \right)-i^{\hspace{0.5mm} n \cdot (n-1)}$, and this will be the goal of Section \ref{sec:Section3}.


\section{Smallest \bm{$a$} for any given value of the constant congruence speed} \label{sec:Section3}

In this section, we prove the last conjecture stated in \cite{Ripa:16}.

\begin{theorem} \label{Theorem 1}
Let $\tilde{a}(n)$ be defined as in Definition \ref{def1.2}. $\forall n \in \mathbb{N}-\{0,1\}$,
$$\tilde{a}(n) = \begin{cases}
2^n \cdot \left(5+2 \cdot \sin{\left(\frac{\pi \cdot n}{2}\right)}+4 \cdot \cos{\left(\frac{\pi \cdot n}{2}\right)}  \right) +1 \quad \textnormal{iff} \quad n \equiv \{2, 3\}\pmod{4} 	\\

2^n \cdot \left(5-2 \cdot \sin{\left(\frac{\pi \cdot n}{2}\right)}-4 \cdot \cos{\left(\frac{\pi \cdot n}{2}\right)}  \right) -1 \quad \textnormal{iff} \quad n \equiv \{0, 1\}\pmod{4}
    \end{cases}\,;$$
    while $\tilde{a}(1)=2$. Additionally, $\{a(0)\}=\{1\}=\tilde{a}(0)$.
\end{theorem}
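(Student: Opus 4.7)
The plan is to reduce Theorem 1 to the inequality $\tilde{a}_5(n) \leq \tilde{a}_{s_1}(n)$ for every $s_1 \in \{1,2,3,4,6,7,8,9\}$ and every $n \geq 2$, since Equation (\ref{eq:5}) has already expressed $\tilde{a}_5(n)$ in exactly the closed form stated in Theorem 1. By Definition \ref{def1.2}, $\tilde{a}(n)=\min(\tilde{a}_1(n),\tilde{a}_2(n),\ldots,\tilde{a}_9(n))$, so this reduction suffices.

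The first move is to observe that Equation (\ref{eq:5}) always returns one of the four integers $2^n - 1$, $2^n + 1$, $3 \cdot 2^n - 1$, or $3 \cdot 2^n + 1$ according to $n \pmod{4}$, giving the uniform upper bound $\tilde{a}_5(n) \leq 3 \cdot 2^n + 1$. The second move collects the lower bounds for the other classes from Subsection \ref{sec:SUBSECTION 2.2}: for $s_1 \in \{4,6\}$, Equations (\ref{eq:10}) and (\ref{eq:11}) yield the exact values $\tilde{a}_4(n)=5^n-1$ and $\tilde{a}_6(n)=5^n+1$; for $s_1 \in \{1,9\}$, the inequality $V(a) \leq \min(\acute{n},\grave{n})$ with $\acute{n}$ the $5$-adic valuation of $a^2-1$ forces $\tilde{a}_{s_1}(n) > \sqrt{5^n+1}$; for $s_1 \in \{2,3,7,8\}$, the analogous bound on the $5$-adic valuation of $a^2+1$ produces $\tilde{a}_{s_1}(n) \geq \sqrt{5^n-1}$.

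Comparing these bounds with $3 \cdot 2^n + 1$ settles the case $s_1 \in \{4,6\}$ at once, since $5^n - 1 > 3 \cdot 2^n + 1$ for every $n \geq 2$ (by a short induction on the ratio $5/2 > 1$). For the remaining classes I need $\sqrt{5^n - 1} > 3 \cdot 2^n + 1$, which rearranges to $5^n > 9 \cdot 4^n + 6 \cdot 2^n + 2$, i.e., $(5/4)^n$ must exceed $9$ up to a lower-order correction; a direct check shows this holds for all $n \geq 10$ while failing for $n \leq 9$, so the asymptotic regime is handled cleanly.

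The main obstacle is therefore the small-$n$ window $2 \leq n \leq 9$, where the $5$-adic valuation bound is too loose. The plan is to compute $\tilde{a}_{s_1}(n)$ explicitly for each of the $8 \times 8 = 64$ pairs $(s_1, n)$ with $s_1 \neq 5$ and $2 \leq n \leq 9$, using the closed forms of Proposition \ref{prop6} (Equations (\ref{eq:6}), (\ref{eq:7}), (\ref{eq:14}), (\ref{eq:15})) together with Proposition \ref{prop7} for $s_1 \in \{2,8\}$, and to compare each computed value against $\tilde{a}_5(n) \in \{2^n - 1,\, 2^n + 1,\, 3 \cdot 2^n - 1,\, 3 \cdot 2^n + 1\}$ extracted from Equation (\ref{eq:5}). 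Since every $\tilde{a}_{s_1}(n)$ in this finite window is a concrete integer determined by residues of $2^{5^n}$ and $5^{2^n}$ modulo $10^n$, this verification is routine and finite, and completes the proof together with the asymptotic argument.
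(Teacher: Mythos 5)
Your proposal is correct and follows essentially the same route as the paper's proof: collect the lower bounds $\tilde{a}_{\{1,9\}}(n)>\sqrt{5^n+1}$, $\tilde{a}_{\{2,8\}}(n)\geq\sqrt{5^n-1}$, $\tilde{a}_{\{3,7\}}(n)>\sqrt{5^n-1}$, $\tilde{a}_{\{4,6\}}(n)=5^n-1$ from Subsection \ref{sec:SUBSECTION 2.2}, bound $\tilde{a}_5(n)$ above by a multiple of $2^n$, dispose of large $n$ by comparing growth rates, and finish with a finite check on small $n$. The one difference is that you extract the sharp bound $\tilde{a}_5(n)\in\{2^n\pm1,\,3\cdot2^n\pm1\}$, so the crossover $\sqrt{5^n-1}>3\cdot2^n+1$ occurs at $n=10$ and your table covers only $2\leq n\leq 9$, whereas the paper's main proof uses the looser $9\cdot2^n+1$ from Equation (\ref{eq:9}) and tabulates $2\leq n\leq 19$; this refinement is precisely the ``short proof'' the paper itself records in Remark \ref{rem2}. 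Do remember to close out the trivial cases $\tilde{a}(1)=2$ and $\tilde{a}(0)=1$ (via $V(2)=1$ and $V(1)=0$), which are part of the statement but absent from your plan.
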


\begin{proof}
From Section \ref{sec:SUBSECTION 2.2} (see Equations (\ref{eq:6}) to (\ref{eq:17})), we know that, $\forall n \geq 2$,

$$\tilde{a}_{\{1,9\}}(n) =\min_{n} \left(\tilde{a}_1(n),\tilde{a}_9(n)\right)>\sqrt{5^n+1};$$

$$\tilde{a}_{\{2,8\}}(n) =\min_n \left(\tilde{a}_2(n),\tilde{a}_8(n)\right) \geq \sqrt{5^n-1};$$

$$\tilde{a}_{\{3,7\}}(n) =\min_n \left(\tilde{a}_3(n),\tilde{a}_7(n)\right)>\sqrt{5^n-1};$$

$$\tilde{a}_{\{4,6\}}(n) =\min_n \left(\tilde{a}_4(n),\tilde{a}_6(n)\right)=\tilde{a}_4(n)=5^n-1.$$

Hence,

\begin{equation} \label{eq:19}
\begin{split}
\tilde{a}_{\{1,2,3,4,6,7,8,9\}}(n) = \min_{n}\left(\tilde{a}_{\{1,9\}}(n), \tilde{a}_{\{2,8\}}(n), \tilde{a}_{\{3,7\}}(n), \tilde{a}_{\{4,6\}}(n) \right) \\
\Rightarrow \tilde{a}_{\{1,2,3,4,6,7,8,9\}}(n) \geq \sqrt{5^n-1}.
\end{split}
\end{equation}

On the other hand, Equation (\ref{eq:9}) implies that $\nexists n \in \mathbb{N}-\{{0,1\}} : \tilde{a}_5(n) > 9 \cdot 2^n+1$, since

$$ \tilde{a}_5(n)= \begin{cases}
2^n \cdot \left(2 \cdot \cos \left( \frac{\pi \cdot (n-1)}{2} \right)-4 \cdot \sin \left(\frac{\pi \cdot (n-1)}{2} \right) +5 \right)+1 \quad \textnormal{iff} \quad n \equiv \{2,3\} \pmod{4} \\
2^n \cdot \left(4 \cdot \sin \left( \frac{\pi \cdot (n-1)}{2} \right)-2 \cdot \cos \left(\frac{\pi \cdot (n-1)}{2} \right) +5 \right)-1 \quad \textnormal{iff} \quad n \equiv \{0,1\} \pmod{4}
    \end{cases}\,. $$

Thus, in order to prove the main statement of Theorem \ref{Theorem 1}, it is sufficient to check the inequality $\sqrt{5^n-1}>9 \cdot 2^n+1$, observing that it is certainly true for every $n \geq 20$ (since $\sqrt{5^x-1}=9 \cdot 2^x+1 \Rightarrow 19.693374<x<19.693375$). Consequently, we only need to verify that, $\forall n \in [2,19]$, $\tilde{a}_5 (n)<\tilde{a}_{\{1,2,3,4,6,7,8,9\}}(n)$, and the values are listed in Table \ref{Table:1} (see Equations (\ref{eq:6}) to (\ref{eq:17})).

\begin{table}[ht]
    \centering
    \begin{tabular}{|>{\columncolor[gray]{0.8}}c|c|c|}
        \hline
        \cellcolor{Golden}\boldmath
\bm{$n=V(a)$}&\cellcolor{Mauve}\bm{$\tilde{a}_5(n)$}&\cellcolor{LightCyan}\bm{$\tilde{a}_{\{1,2,3,4,6,7,8,9\}}(n)}$ \\\hline
$1$&$\nexists\tilde{a}_5(1)$&\color{red}$2$\\\hline
$2$&\color{red}$5$&$7$\\\hline
$3$&\color{red}$25$&$57$\\\hline
$4$&\color{red}$15$&$182$\\\hline
$5$&\color{red}$95$&$3124$\\\hline
$6$&\color{red}$65$&$1068$\\\hline
$7$&\color{red}$385$&$32318$\\\hline
$8$&\color{red}$255$&$390624$\\\hline
$9$&\color{red}$1535$&$280182$\\\hline
$10$&\color{red}$1025$&$3626068$\\\hline
$11$&\color{red}$6145$&$23157318$\\\hline
$12$&\color{red}$4095$&$120813568$\\\hline
$13$&\color{red}$24575$&$1220703124$\\\hline
$14$&\color{red}$16385$&$1097376068$\\\hline
$15$&\color{red}$98305$&$11109655182$\\\hline
$16$&\color{red}$65535$&$49925501068$\\\hline
$17$&\color{red}$393215$&$762939453124$\\\hline
$18$&\color{red}$262145$&$355101282318$\\\hline
$19$&\color{red}$1572865$&$19073486328124$
            \\ \hline
\end{tabular}
\caption{Comparison between the smallest $a(n)$ congruent modulo $10$ to $5$, 
whose constant congruence speed is equal to $n \leq 19$, and the minimum $a(n) \equiv \{1,2,3,4,6,7,8,9\} \pmod{10}$. \label{Table:1}}
\end{table}

As it follows from Equations (\ref{eq:9}) and (\ref{eq:19}), $\forall n \in \mathbb{N}-\{0,1\}$, $\tilde{a}(n) := \tilde{a}_{\{1,2,3,4,5,6,7,8,9\}}(n)=\tilde{a}_5(n)$.

Therefore, in order to complete the proof, it is sufficient to observe that $V(2)=1$ and $V(1)=0$ (see \cite{Ripa:16}).
\end{proof}

\begin{corollary} \label{Corollary 1}
Let $\tilde{a}(n)$ be defined as in Definition \ref{def1.2}, and let $i^2=-1$. $\forall n \in \mathbb{N}-\{0,1\}$,
\begin{equation} \label{eq:20}
\tilde{a}(n)=2^n \cdot \left((-1)^{n-1}+2 \right)-i^{\hspace{0.5mm} n \cdot (n-1)}.
\end{equation}
\end{corollary}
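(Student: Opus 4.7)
The plan is to verify that the compact closed form in Equation~(\ref{eq:20}) coincides with the piecewise expression for $\tilde{a}(n)$ already established in Theorem~\ref{Theorem 1}. Since Theorem~\ref{Theorem 1} gives $\tilde{a}(n)$ explicitly under the four residue classes $n \bmod 4$, the corollary reduces to a case analysis in which we independently simplify the two factors $2^n \cdot \left((-1)^{n-1}+2\right)$ and $i^{\hspace{0.5mm} n(n-1)}$ modulo these four cases and match them with Theorem~\ref{Theorem 1}.

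First I would handle the parity factor: since $n(n-1)$ is even for every integer $n$, we have $i^{\hspace{0.5mm}n(n-1)} = (-1)^{n(n-1)/2}$. Evaluating $n(n-1)/2$ modulo $2$ on the four residues of $n$ modulo $4$ shows that $i^{\hspace{0.5mm}n(n-1)} = +1$ when $n \equiv \{0,1\} \pmod{4}$ and $i^{\hspace{0.5mm}n(n-1)} = -1$ when $n \equiv \{2,3\} \pmod{4}$. Thus the term $-i^{\hspace{0.5mm}n(n-1)}$ in~(\ref{eq:20}) reproduces exactly the $\mp 1$ split appearing in the two branches of Theorem~\ref{Theorem 1}.

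Next I would reduce the coefficient $(-1)^{n-1}+2$: this equals $3$ when $n$ is odd and $1$ when $n$ is even, so $2^n \cdot \left((-1)^{n-1}+2\right)$ equals $3\cdot 2^n$ (odd $n$) or $2^n$ (even $n$). On the Theorem~\ref{Theorem 1} side, one plugs the values $\sin(\pi n/2)$ and $\cos(\pi n/2)$ on each of $n \equiv 0,1,2,3 \pmod 4$, yielding the coefficient $5 - 2\sin(\pi n/2) - 4\cos(\pi n/2) \in \{1,3\}$ (for $n\equiv 0,1$) and $5 + 2\sin(\pi n/2) + 4\cos(\pi n/2) \in \{1,3\}$ (for $n\equiv 2,3$), with the value $3$ occurring precisely when $n$ is odd and $1$ when $n$ is even. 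Matching these to the two possibilities $\{1,3\}$ produced by $(-1)^{n-1}+2$ completes the equivalence on each of the four residue classes.

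The argument is essentially a bookkeeping verification, so there is no real obstacle; the only mild subtlety is keeping track of the signs of $\sin(\pi n/2)$ and $\cos(\pi n/2)$ on $n\equiv 1$ versus $n\equiv 3$ (and likewise $n\equiv 0$ versus $n\equiv 2$) so that the $\pm 1$ constant term in~(\ref{eq:20}) pairs correctly with the appropriate branch of Theorem~\ref{Theorem 1}. Since Theorem~\ref{Theorem 1} is already proved and the four cases exhaust all $n \in \mathbb{N}-\{0,1\}$, this finishes the corollary.
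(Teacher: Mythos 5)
Your verification is correct: the mod-$4$ bookkeeping for both the coefficient $(-1)^{n-1}+2\in\{1,3\}$ and the sign $i^{\,n(n-1)}=(-1)^{n(n-1)/2}$ checks out against the two branches of Theorem \ref{Theorem 1}, and since the four residue classes exhaust $n\geq 2$ the identity follows. The paper reaches the same conclusion from the same starting point (the piecewise trigonometric formula of Equation (\ref{eq:5})) but by a different computational route: it rewrites the trigonometric coefficients via Euler's formula, passing through the intermediate closed form $\tilde{a}(n)=2^{n+1}-2^n e^{i\pi n}+e^{\frac{i\pi}{2}(n(n+3)+2)}$ to first establish Berselli's expression $2^{n+1}+2^n(-1)^{n-1}-i^{\,n(n+3)}$, and only then observes that $i^{\,n(n+3)}=i^{\,n(n-1)}$ because the exponents differ by $4n$. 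Your direct four-case residue check is more elementary and avoids the complex-exponential detour entirely; what the paper's route buys is the explicit verification of the OEIS formula of \cite{OEIS:14} as a by-product. Either argument is complete, and neither has a gap.
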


\begin{proof}
The statement of Corollary \ref{Corollary 1} easily follows from Theorem \ref{Theorem 1}.

Since, in September 2020, Bruno Berselli noted that Sequence $A337392$ of the OEIS is given by $a(n)=\left(2-(-1)^n \right) \cdot 2^n+i^{(n+1) \cdot (n+2)}$ (see Formula in Reference \cite{OEIS:14}), it trivially follows that Equation (\ref{eq:5}) can be further simplified if we prove the claim;
\begin{equation} \label{eq:21}
\begin{split}
\tilde{a}(n) = \begin{cases}
2^n \cdot \left(5 + 2 \cdot \sin \left( \frac{\pi \cdot n}{2} \right)+4 \cdot \cos \left(\frac{\pi \cdot n}{2} \right) \right)+1 \quad \textnormal{iff} \quad n \equiv \{2,3\} \pmod{4} \\
2^n \cdot \left(5 - 2 \cdot \sin \left( \frac{\pi \cdot n}{2} \right)-4 \cdot \cos \left(\frac{\pi \cdot n}{2} \right)\right)-1 \quad \textnormal{iff} \quad n \equiv \{0,1\} \pmod{4}
\end{cases} &
\\
= 2^{n+1}+ \left(\sin\left(\frac{\pi \cdot (n+1) \cdot (n+2)}{2}\right)-2^n \cdot \sin(\pi \cdot n)\right) \cdot i-2^n \cdot \cos(\pi \cdot n) &
\\
 +\cos\left(\frac{\pi \cdot (n+1) \cdot (n+2)}{2} \right).
\end{split}
\end{equation}

Hence, $2^n \cdot \cos(\pi \cdot n)-i \cdot 2^n \cdot \sin(\pi \cdot n)=-2^n \cdot e^{i \cdot \pi \cdot n}$ implies that
\begin{equation} \label{eq:22}
\tilde{a}(n)=2^{n+1}-2^n \cdot e^{i \cdot \pi \cdot n}+e^{\frac{i \cdot \pi}{2}  \cdot (n \cdot (n+3)+2)}.
\end{equation}

Since $e^{i \cdot \pi}+1=0 \Rightarrow e^{\frac{i \cdot \pi}{2}}=i$ and $e^{i \cdot \pi \cdot n}=(-1)^n$, it follows that
\begin{equation} \label{eq:23}
\tilde{a}(n)=2^{n+1}-2^n \cdot (-1)^n + i^{\hspace{0.5mm} n \cdot (n+3)+2}.
\end{equation}

Thus, Berselli’s formula is correct and we have
\begin{equation} \label{eq:24}
\tilde{a}(n)=2^{n+1}+2^n \cdot (-1)^{n-1} - i^{\hspace{0.5mm} n \cdot (n+3)}.
\end{equation}

Therefore, in order to confirm Equation (\ref{eq:20}) and conclude the proof, it is sufficient to observe that $i^{\hspace{0.5mm} n \cdot (n+3)}=i^{\hspace{0.5mm} n \cdot (n-1)}$.
\end{proof}

\begin{remark} \label{rem2}
Corollary \ref{Corollary 1} provides also a short proof of Theorem \ref{Theorem 1}, since
\begin{equation} \label{eq:25}
\tilde{a}(n)=2^n \cdot \left((-1)^{n-1}+2 \right)-i^{\hspace{0.5mm} n \cdot (n-1)} \leq 2^n \cdot (1+2)+1.
\end{equation}
Thus, $\sqrt{5^n-1}>3 \cdot 2^n+1$ holds for any $n \geq 10$.
\end{remark}

\begin{corollary} \label{Corollary 2}
$\forall n \in \mathbb{N}-\{0,1\}$ and $\forall k \in \mathbb{N}_{0}$,
\begin{equation} \label{eq:26}
\textnormal{A}_5(n)=\left(\left(2^n \cdot \left((-1)^{n-1}+2 \right)-i^{\hspace{0.5mm} n \cdot (n-1)} \right) \vee \left(2^n \cdot \left((-1)^n+8 \right)+i^{\hspace{0.5mm} n \cdot (n-1)} \right)\right)+k \cdot 10 \cdot 2^n.
\end{equation}
\end{corollary}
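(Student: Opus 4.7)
The plan is to deduce Corollary~\ref{Corollary 2} directly by combining Equation~(\ref{eq:8}), which already enumerates $\textnormal{A}_5(n)$ as a union of two arithmetic progressions, with the compact closed form $\tilde{a}(n)=\tilde{a}_5(n)=2^{n}\left((-1)^{n-1}+2\right)-i^{\,n(n-1)}$ proved in Corollary~\ref{Corollary 1}. The key observation is that the two ``seed'' branches of Equation~(\ref{eq:8}) are complementary modulo $10\cdot 2^{n}=5\cdot 2^{n+1}$, so once one of them is identified the other comes for free.

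First, I would note that the two $k=0$ representatives in Equation~(\ref{eq:8}), call them $a(n)$ and $b(n)$, satisfy
$$a(n)+b(n)=2^{n}\bigl((5+2\sin(\pi n/2)+4\cos(\pi n/2))+(5-2\sin(\pi n/2)-4\cos(\pi n/2))\bigr)+(1-1)=10\cdot 2^{n}.$$
Since each of the two arithmetic progressions in Equation~(\ref{eq:8}) has common difference $5\cdot 2^{n+1}=10\cdot 2^{n}$, this sum relation forces $\textnormal{A}_5(n)$ to coincide with $\{a(n),\,10\cdot 2^{n}-a(n)\}+k\cdot 10\cdot 2^{n}$ for $k\in\mathbb{N}_0$.

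Second, I would identify the two representatives explicitly. By Theorem~\ref{Theorem 1} and Corollary~\ref{Corollary 1}, the smaller of $a(n)$ and $b(n)$ equals $\tilde{a}_5(n)=2^{n}\left((-1)^{n-1}+2\right)-i^{\,n(n-1)}$ for every $n\geq 2$. Its complement modulo $10\cdot 2^{n}$ is then
$$10\cdot 2^{n}-\tilde{a}_5(n)=2^{n}\bigl(10-(-1)^{n-1}-2\bigr)+i^{\,n(n-1)}=2^{n}\bigl((-1)^{n}+8\bigr)+i^{\,n(n-1)},$$
which is exactly the second option inside the disjunction in Equation~(\ref{eq:26}).

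Finally, I would conclude by appending the translation orbit $k\cdot 10\cdot 2^{n}$, recovering precisely the statement of Corollary~\ref{Corollary 2}. There is no substantive obstacle here: the ingredients are only the elementary identity $a(n)+b(n)=10\cdot 2^{n}$ and the already-proven Corollary~\ref{Corollary 1}; the argument is pure bookkeeping, and no fresh residue-class analysis in $\mathbb{Z}_{10}$ needs to be redone.
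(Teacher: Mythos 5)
Your proof is correct, and it reaches the conclusion by a slightly different route than the paper. Both arguments start from Equation (\ref{eq:8}) together with the closed form $\tilde{a}_5(n)=2^{n}\left((-1)^{n-1}+2\right)-i^{\,n(n-1)}$ from Corollary \ref{Corollary 1}, and both identify the first branch of Equation (\ref{eq:26}) as the progression seeded by $\tilde{a}_5(n)$. Where you diverge is in how the second seed is obtained: the paper repeats the Euler-formula manipulation from the proof of Corollary \ref{Corollary 1}, rewriting the trigonometric expression of the non-minimal branch of Equation (\ref{eq:8}) as $2^{n}\left(2^{3}+\cos(\pi n)+i\sin(\pi n)\right)+\cos\left(\tfrac{\pi n(n-1)}{2}\right)+i\sin\left(\tfrac{\pi n(n-1)}{2}\right)=2^{n}\left((-1)^{n}+8\right)+i^{\,n(n-1)}$, whereas you observe that the two $k=0$ seeds of Equation (\ref{eq:8}) sum to $10\cdot 2^{n}$ (the $\pm$ trigonometric terms and the $\pm 1$ cancel), so the second seed is simply $10\cdot 2^{n}-\tilde{a}_5(n)$. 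Your complementarity identity is the more economical of the two: it avoids a second pass through the complex-exponential bookkeeping and makes transparent why the two branches of Equation (\ref{eq:26}) are reflections of each other modulo $10\cdot 2^{n}$. The only step you use implicitly and should state is that $i^{\,n(n-1)}$ is real (since $n(n-1)$ is even, it equals $\pm 1$), so that the subtraction indeed turns $-i^{\,n(n-1)}$ into $+i^{\,n(n-1)}$; with that noted, and with the routine check that both seeds lie in $(0,10\cdot 2^{n})$ so that the minimum of $\textnormal{A}_5(n)$ is the minimum of the two seeds, the argument is complete.
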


\begin{proof}
Equation \ref{eq:5} and Corollary \ref{Corollary 1} (Berselli’s formula) imply that
$$\textnormal{A}_5(n)=2^n \cdot \left((-1)^{n-1}+2 \right)-i^{\hspace{0.5mm} n \cdot (n-1)} + k \cdot 10 \cdot 2^n$$
\begin{equation*}
\cup
\begin{cases}
2^n \cdot \left(5 + 2 \cdot \sin \left( \frac{\pi \cdot n}{2} \right)+4 \cdot \cos \left(\frac{\pi \cdot n}{2} \right) \right)+1 + k \cdot 10 \cdot 2^n \quad \textnormal{iff} \quad n \equiv \{0,1\} \pmod{4} \\
2^n \cdot \left(5 - 2 \cdot \sin \left( \frac{\pi \cdot n}{2} \right)-4 \cdot \cos \left(\frac{\pi \cdot n}{2} \right)\right)-1 + k \cdot 10 \cdot 2^n \quad \textnormal{iff} \quad n \equiv \{2,3\} \pmod{4}
\end{cases}.
\end{equation*}

\vspace{3mm}
Since, $\forall n \geq 2$, it easy to verify (as shown in the proof of the aforementioned Corollary \ref{Corollary 1}) that
\begin{equation*}
\begin{cases}
2^n \cdot \left(5 + 2 \cdot \sin \left( \frac{\pi \cdot n}{2} \right)+4 \cdot \cos \left(\frac{\pi \cdot n}{2} \right) \right)+1 \quad \textnormal{iff} \quad n \equiv \{0,1\} \pmod{4} \\
2^n \cdot \left(5 - 2 \cdot \sin \left( \frac{\pi \cdot n}{2} \right)-4 \cdot \cos \left(\frac{\pi \cdot n}{2} \right)\right)-1 \quad \textnormal{iff} \quad n \equiv \{2,3\} \pmod{4}
\end{cases}
\end{equation*}
$$= 2^n \cdot \left(2^3+\cos(\pi \cdot n)+i \cdot \sin(\pi \cdot n) \right)+\cos\left(\frac{\pi \cdot n \cdot (n-1)}{2} \right)+i \cdot \sin\left(\frac{\pi \cdot n \cdot (n-1)}{2} \right) $$
$= 2^n \cdot \left((-1)^n+8 \right)+i^{\hspace{0.5mm} n \cdot (n-1)},$ the statement of Corollary \ref{Corollary 2} follows.
\end{proof}


\section{The constant congruence speed of the prime numbers} \label{sec:Section4}

The set of the prime numbers is very important in many fields of mathematics due to the fundamental theorem of arithmetic (in particular it is central in number theory, computer sciences, and cryptography), so we are interested in knowing if the constant congruence speed of any base which is a prime number maps to every (arbitrarily large) value $V(a) \in \mathbb{N}-\{0\}$.

\begin{definition} \label{def1.3}
Let $a \in \mathbb{N}-\{0\}$ not be a multiple of $10$. $\mathbb{P}=\{p \in \mathbb{N} : p \hspace{2mm} \textit{is prime}\}=\{a : a \hspace{2mm} \textit{is prime}\}$ indicates the set of prime numbers (the last equality holds since $1$ and every multiple of $10$ cannot be prime).
\end{definition}

\begin{definition} \label{def1.4}
${\smalloverbrace{\mathbb{A}}}=\{\left((k+1) \cdot 10^n-1 \right) \in \mathbb{P} : n \in \mathbb{N}-\{0\} \hspace{1mm}\wedge \hspace{1mm} k \in \mathbb{N}_{0}\}$ and \linebreak $\overline{\overline{\mathbb{A}}}=\{\left((2 \cdot k+1) \cdot 10^n-1 \right) \in \mathbb{P} : n \in \mathbb{N}-\{0\} \wedge k \in \mathbb{N}_{0}\}$, so $\overline{\overline{\mathbb{A}}} \subseteq \smalloverbrace{\mathbb{A}} \subset \mathbb{P}$. Additionally, let $\smalloverbrace{a} := (k+1) \cdot 10^n-1$ and let $\overline{\overline{a}} := (2 \cdot k+1) \cdot 10^n-1$. Furthermore, let ${\smalloverbrace{a}}^{*} \in {\smalloverbrace{\mathbb{A}}}$ represents the generic element of $\smalloverbrace{\mathbb{A}}$ and (similarly) let $\overline{\overline{a}}^{*} \in \overline{\overline{\mathbb{A}}}$ be the generic element of $\overline{\overline{\mathbb{A}}}\hspace{0.2mm}$.
\end{definition}

In order to confirm that the set $\{V(p) : p \in \mathbb{P}\}$ is not bounded above, let us firstly introduce the following lemma.

\begin{lemma} \label{Lemma1}
If $b$ is sufficiently large to guarantee $V\left(\smalloverbrace{a},b \right)=V\left(\smalloverbrace{a} \right)$, then $V\left(\smalloverbrace{a} \right) \geq n$, $\forall n \in \mathbb{N}-\{0\}$. In particular, assuming $n \neq 1$, $V\left(\smalloverbrace{a} \right)=n \Leftrightarrow k \not\equiv 9 \pmod {10}$.
\end{lemma}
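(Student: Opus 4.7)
The plan is to reduce the lemma directly to Equation~(\ref{eq:7}) of Section~\ref{sec:SUBSECTION 2.2}. Since $\smalloverbrace{a}=(k+1)\cdot 10^{n}-1$ satisfies $\smalloverbrace{a}\equiv 10^{n}-1\pmod{10^{n}}$, the number $\smalloverbrace{a}$ already matches the second branch $10^{n}-1+k\cdot 10^{n}$ of that equation, namely the branch coming from the decadic solution $\alpha''_{9}=-1$ of $y^{5}=y$. Primality of $\smalloverbrace{a}$ plays no role in computing $V(\smalloverbrace{a})$ itself; it will matter only later in Section~\ref{sec:Section4}, via Dirichlet's theorem, to guarantee that $\smalloverbrace{\mathbb{A}}$ is nonempty for every $n$. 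The ``$b$ sufficiently large'' hypothesis is just the standard stabilization hypothesis of Proposition~\ref{prop4} and plays no combinatorial role.

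First I would dispatch the generic case $k\not\equiv 9\pmod{10}$. The second branch of Equation~(\ref{eq:7}), applied at level $n\geq 2$, immediately places $\smalloverbrace{a}\in\textnormal{A}_{9}(n)$, so $V(\smalloverbrace{a})=n$; for $n=1$ the same membership still yields $V(\smalloverbrace{a})\geq 1$, which is all the first assertion of the lemma requires.

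Second I would handle $k\equiv 9\pmod{10}$ by a trailing-nines promotion. Writing $k+1=10^{\ell}m$ with $\ell\geq 1$ and $m\not\equiv 0\pmod{10}$, I rewrite
\[
\smalloverbrace{a}\;=\;m\cdot 10^{\,n+\ell}-1\;=\;\bigl((m-1)+1\bigr)\cdot 10^{\,n+\ell}-1,
\]
exhibiting $\smalloverbrace{a}$ as a member of $\smalloverbrace{\mathbb{A}}$ at the higher level $n':=n+\ell\geq 2$ with new parameter $k':=m-1\not\equiv 9\pmod{10}$. Invoking the previous case at level $n'$ gives $V(\smalloverbrace{a})=n+\ell>n$. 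The two cases together yield $V(\smalloverbrace{a})\geq n$ unconditionally, and the contrapositive of the second shows that $V(\smalloverbrace{a})=n$ forces $k\not\equiv 9\pmod{10}$. The restriction $n\neq 1$ in the equivalence is intrinsic, since at $n=1$ the two branches of Equation~(\ref{eq:7}) overlap: for example, $49$ has $k=4\not\equiv 9\pmod{10}$ yet $V(49)=2\neq 1$.

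The only real obstacle is notational: one must check that the exclusion $k\equiv 9\pmod{10}$ in the second branch of Equation~(\ref{eq:7}) is exactly the condition that $k+1$ has no trailing $0$'s, which is what makes the recursive promotion close the equivalence. Conceptually, the lemma is the tautology that $\smalloverbrace{a}$ terminates in exactly $n$ plus the number of trailing zeros of $k+1$ consecutive $9$'s, and Equation~(\ref{eq:7}) then translates this digit count into the constant congruence speed.
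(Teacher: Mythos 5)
Your proof is correct and follows essentially the same route as the paper: both reduce the lemma to the second branch of Equation (\ref{eq:7}), handle $k \equiv 9 \pmod{10}$ by promoting $\smalloverbrace{a}$ to a higher level (your explicit factorization $k+1=10^{\ell}m$ versus the paper's observation that $\smalloverbrace{a} \equiv 10^{n+1}-1 \pmod{10^{n+1}}$ places it in $\textnormal{A}_9(n+c)$, $c \geq 1$), and treat $n=1$ separately. The only divergence is cosmetic: the paper pins down the $n=1$ case via the mod-$25$ classification (so that $V(\smalloverbrace{a}(k))=1$ iff $k \not\equiv 4 \pmod 5$), whereas you settle for the trivial bound $V \geq 1$ plus the counterexample $49$, which suffices for the statement as written.
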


\begin{proof}
Let $n \geq 2$. The statement easily follows from Equation (\ref{eq:7}), since $n \geq 2 \Rightarrow V(10^n-1+k \cdot 10^n)=n$ for every $k \not\equiv 9 \pmod{10}$, while $k \equiv 9 \pmod{10} \Rightarrow \smalloverbrace{a} \equiv \left(10^{n+1}-1 \right)\pmod {10^{n+1}}$ for any $n$ as above. Thus, $k \equiv 9 \pmod{10} \Leftrightarrow {\smalloverbrace{a}}^{*}$ belongs to $\textnormal{A}_9(n+c)$, where $c \in \mathbb{N}-\{0\}$.

If $n=1$, $\forall k \in \mathbb{N}_{0}$, $\smalloverbrace{a}(k) \equiv \{4,9,14,19,24\} \pmod{25}$. Consequently \cite{Ripa:16}, $V\left(\smalloverbrace{a}(k)\right)>1$ if and only if $\smalloverbrace{a}(k) \equiv 24 \pmod{25}$. 

Hence, $V\left(\smalloverbrace{a}(k)\right) \neq1$ for any $k \equiv 4 \pmod{5}$, including the case $k \equiv 4 \pmod {10}$.

Therefore, for any $n \geq 2$, we have shown that $V\left(\smalloverbrace{a}\right)=n \Leftrightarrow k \not\equiv 9 \pmod{10}$, while 
$n=1 \Rightarrow V\left(\smalloverbrace{a}(k)\right)=n$ if and only if $k \not\equiv 4 \pmod{5}$; since, $\forall n \in \mathbb{N}-\{0\}$, $k \equiv 4 \pmod{5} \Rightarrow V\left(\smalloverbrace{a}(k)\right) \geq n$, the proof of Lemma \ref{Lemma1} is complete.
\end{proof}

\begin{conjecture} \label{conj2}
$\forall \hspace{1mm} {\smalloverbrace{a}}^{*} \in \smalloverbrace{\mathbb{A}}$, $b \geq 2 \Rightarrow V\left({\smalloverbrace{a}}^{*},b\right)=V\left({\smalloverbrace{a}}^{*}\right)$.
\end{conjecture}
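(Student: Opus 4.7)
My plan is to reduce Conjecture \ref{conj2} to a Lifting-the-Exponent (LTE) calculation combined with a simple induction along the tower. Write $a := \smalloverbrace{a}^{*}$ and $M := (k+1)\cdot 10^n$, so that $a = -1 + M$. I would first assume without loss of generality that $n$ is chosen maximally, i.e.\ $k \not\equiv 9 \pmod{10}$; Lemma \ref{Lemma1} then already gives $V(a) = n$, and only the identity $V(a,b) = n$ for every $b \geq 2$ remains to be proved. Primality of $a$ plays no role in the argument itself; it is used only to place $a$ in $\smalloverbrace{\mathbb{A}}$.

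A one-line induction shows that ${^b a}$ is odd for every $b \geq 1$, hence ${^b a} \equiv -1 \pmod{10^n}$. Since $\gcd(a,10) = 1$, for each $b \geq 2$ one may rewrite
\begin{equation*}
D_b \,:=\, v_{10}\bigl({^{b+1}a} - {^b a}\bigr) \,=\, v_{10}\bigl(a^{E_b} - 1\bigr), \qquad E_b \,:=\, {^b a} - {^{b-1}a},
\end{equation*}
with $E_b$ even (and $D_1 = v_{10}(a^{a-1}-1)$, with $a-1$ even as well). Setting $t_p := v_p(a^2 - 1)$ for $p \in \{2,5\}$ and factoring $a^2 - 1 = (M-2)M$, one reads off $t_5 = n + v_5(k+1)$ and $t_2 = 1 + n + v_2(k+1)$ for $n \geq 2$. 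Because $a^2 \equiv 1 \pmod{4}$, classical LTE then yields, for every even exponent $\Delta$,
\begin{equation*}
v_2\bigl(a^\Delta - 1\bigr) \,=\, t_2 + v_2(\Delta) - 1, \qquad v_5\bigl(a^\Delta - 1\bigr) \,=\, t_5 + v_5(\Delta),
\end{equation*}
so that $D_b = \min\bigl(t_2 + v_2(E_b) - 1,\; t_5 + v_5(E_b)\bigr)$, and analogously for $D_1$ with $\Delta = a-1$.

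From here, an induction on $b$ gives $D_b - D_{b-1} = n$ for every $b \geq 2$, which is precisely $V(a,b) = n$. At each step the two LTE formulas determine both $v_2(E_{b+1}) = v_2(a^{E_b}-1)$ and $v_5(E_{b+1}) = v_5(a^{E_b}-1)$, and one verifies that the smaller of the two increases by exactly $n$. The main obstacle is a three-branch case analysis on $\gcd(k+1,10)$: in the generic branch $\gcd(k+1,10) = 1$ one has $t_2 - 1 = t_5 = n$, and the LTE formula collapses to the clean identity $v_{10}(a^\Delta - 1) = n + v_{10}(\Delta)$; in the branch $5 \mid k+1$ one has $t_5 = n+1$; and in the branch $2 \mid k+1$ one has $t_2 > n+1$. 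In these last two branches the $\min$ defining $D_b$ is not necessarily achieved by the same prime at $b = 1$ and at larger $b$, so one has to track the sign of $v_2(E_b) - v_5(E_b)$ across the induction and verify that it drifts monotonically in the direction consistent with the dominating valuation growing by exactly $n$ per step. The boundary case $n = 1$ requires a sharper estimate of $v_2(M-2)$ (which can exceed $1$ when $k+1$ is odd) but follows the same scheme and yields the same conclusion.
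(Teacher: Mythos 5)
The first thing to say is that the paper does not prove this statement at all: it is labelled Conjecture \ref{conj2} and left open, the only accompanying material being Remark \ref{rem3}, which shows that the hypothesis $b \geq 2$ cannot be weakened to $b \geq 1$. So there is no proof of the paper's to compare yours against; what you have written is a proposed resolution of an open conjecture, and as far as I can verify it is a correct one. Your reduction is sound: writing $a=(k+1)\cdot 10^{n}-1$ with $10\nmid (k+1)$ (a normalization you should justify in one sentence --- every element of $\smalloverbrace{\mathbb{A}}$ has a representation with maximal $n$, and the claim is representation-independent), the factorization ${^{b+1}a}-{^{b}a}=a^{\,{^{b-1}a}}\bigl(a^{E_b}-1\bigr)$ together with your two LTE identities gives the \emph{exact} recursions $v_2(E_{b+1})=v_2(E_b)+t_2-1$ and $v_5(E_{b+1})=v_5(E_b)+t_5$, so both valuations are arithmetic progressions in $b$ and your worry about having to ``track the sign of $v_2(E_b)-v_5(E_b)$ across the induction'' dissolves: one is merely comparing two linear functions of $b$. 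Concretely, for $n\geq 2$ one finds $\bigl(v_2(E_{b+1}),v_5(E_{b+1})\bigr)$ equal to $\bigl(bn+1,\,bn\bigr)$, $\bigl(bn+1,\,b(n+e)\bigr)$, or $\bigl(b(n+f)+1,\,bn\bigr)$ in the three branches $\gcd(k+1,10)=1$, $e=v_5(k+1)\geq 1$, $f=v_2(k+1)\geq 1$ respectively, so $D_b$ equals $bn$ or $bn+1$ uniformly in $b\geq 1$ (the minimum is attained by the same prime for every $b$) and $V(a,b)=D_b-D_{b-1}=n$ for every $b\geq 2$; the boundary case $n=1$ (where $v_2(M-2)$ can exceed $1$) works the same way because there $t_5=1$ forces the $5$-adic valuation to be the minimum throughout. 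Two smaller remarks. First, the appeal to Lemma \ref{Lemma1} is unnecessary and better avoided, since your computation independently establishes $V(a)=n$, whereas the paper's own justification of Equation (\ref{eq:7}), on which Lemma \ref{Lemma1} rests, is informal. Second, your formulas reproduce the paper's one data point: for $a=499$ one has $n=2$, $k+1=5$, $t_2=t_5=3$, hence $D_b=2b+1$, giving $V(499,1)=D_1-v_{10}(498)=3$ and $V(499,b)=2$ for all $b\geq 2$, exactly as claimed in Remark \ref{rem3}.
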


\begin{remark} \label{rem3}
In order to show that $\exists {\smalloverbrace{a}}^{*} \in \smalloverbrace{\mathbb{A}} : V\left({\smalloverbrace{a}}^{*},1\right) \neq V\left({\smalloverbrace{a}}^{*},2\right)=V\left({\smalloverbrace{a}}^{*}\right)$, we note that, for any given pair $j \in \mathbb{N}-\{0\}$ and $n \in \mathbb{N}-\{0,1\}$, $V\left((10^n-1)^{5^j},1\right) \neq V\left((10^n-1)^{5^j},2 \right)=V\left((10^n-1)^{5^j} \right)$ (see \cite{Ripa:15}, p. 25). Thus, as a random choice, it is sufficient to take any prime number of the form $c \cdot 10^{10}+(10^2-1)^5$, $c \in \mathbb{N}-\{0\}$, such as $2 \cdot 10^{10}+(10^2-1)^5=295099005 \cdot 10^2-1=29509900499 \in \smalloverbrace{\mathbb{A}}$. Since $V(29509900499,1)=3 \neq 2=V(29509900499,2)$, $b$ have to be greater than $1$.

Checking for smaller candidates than ${\smalloverbrace{a}}^{*}=c \cdot 10^{10}+99^5$, by Hensel’s lifting lemma, we can also see that any odd power of $499 \in \smalloverbrace{\mathbb{A}}$ is congruent modulo $10^3$ to $499$, so $499^{499} \equiv 499 \pmod {10^3}$ and $3=V(499,1) \neq V(499,2)=V(499)=2$ still holds.
\end{remark}

\begin{theorem} \label{Theorem 2}
$\forall a \not\equiv 0 \pmod {10}$, $\exists^{\infty} \hspace{1mm} {\smalloverbrace{a}}^{*} \in \mathbb{P} : V\left({\smalloverbrace{a}}^{*} \right) \geq V(a)$.
\end{theorem}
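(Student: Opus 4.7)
The plan is to reduce the claim to Dirichlet's theorem on primes in arithmetic progressions, using Lemma \ref{Lemma1} to translate the lower bound on $V$ into a single congruence condition modulo $10^{V(a)}$. First I would set $n := V(a)$ and dispose of the trivial case $n=0$: since $\gcd(9,10)=1$, Dirichlet's theorem already furnishes infinitely many primes $p \equiv 9 \pmod{10}$, each of which lies in $\smalloverbrace{\mathbb{A}}$ (taking $n=1$ and $k=(p-9)/10$ in Definition \ref{def1.4}) and automatically satisfies $V(p) \geq 0 = V(a)$.

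The main case is $n \geq 1$. Here I would invoke the key content of Lemma \ref{Lemma1}: for every $k \in \mathbb{N}_0$, the integer $\smalloverbrace{a}=(k+1)\cdot 10^n-1$ has $V(\smalloverbrace{a})\geq n$ (with equality precisely when $k\not\equiv 9\pmod{10}$, but for our purposes the inequality is all that matters). It therefore suffices to exhibit infinitely many primes in the arithmetic progression
$$\mathcal{P}_n := \left\{\,q \cdot 10^n + (10^n - 1) : q \in \mathbb{N}_0\,\right\}.$$
Since the first term $10^n-1$ is coprime to the common difference $10^n$, Dirichlet's theorem produces infinitely many primes in $\mathcal{P}_n$. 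By Definition \ref{def1.4} each such prime is an ${\smalloverbrace{a}}^{*}\in\smalloverbrace{\mathbb{A}}\subset\mathbb{P}$, and Lemma \ref{Lemma1} then gives $V({\smalloverbrace{a}}^{*})\geq n = V(a)$.

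A small technical point to dispatch before closing is that Lemma \ref{Lemma1} presumes a height $b$ large enough to stabilize the congruence speed; because every ${\smalloverbrace{a}}^{*}$ ends in $9$ and is therefore coprime to $10$, Proposition \ref{prop4} guarantees the existence of such a $b$, so the application of Lemma \ref{Lemma1} is legitimate. The main (and essentially only) obstacle in this proof is the appeal to Dirichlet's theorem itself, which is deep but standard; everything else is bookkeeping on top of Lemma \ref{Lemma1}. No control is needed on how $V({\smalloverbrace{a}}^{*})$ depends on $k$: the theorem asks only for the inequality $V({\smalloverbrace{a}}^{*})\geq V(a)$, and by construction this inequality holds uniformly across the progression $\mathcal{P}_n$.
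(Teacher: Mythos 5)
Your proposal is correct and follows essentially the same route as the paper: both apply Dirichlet's theorem to the arithmetic progression $10^{n}-1+m\cdot 10^{n}$ (with $n=V(a)$) and then use Lemma \ref{Lemma1} to conclude that every prime in that progression, being a terminal string of $n$ nines, has congruence speed at least $n$. Your explicit separation of the $V(a)=0$ case and the remark that only the inequality from Lemma \ref{Lemma1} is needed match the paper's treatment in substance, so there is nothing to add.
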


\begin{proof}
Since $V(a)$ indicates the constant congruence speed of $a$ (by Definition \ref{def1.1}, we are allowed to assume the sufficient but not necessary condition $b \geq a+1$), it follows that $a \not\equiv 0 \pmod{10} \Rightarrow V(a) \in \mathbb{N}_{0}$.

Let us invoke Dirichlet’s theorem on arithmetic progressions \cite{Selberg:17, Shapiro:18}, which implies that $\forall (t,d)$ such that $\textnormal{gcd}(t,d)=1$, there is an infinite number of primes of the form $t+m \cdot d$, where $m \in \mathbb{N}_{0}$.

Now, for any $j \in \mathbb{N}-\{0\}$, let $t := t(j)$, and similarly let $d := d(j)$. In particular, assume (without loss of generality) $t(j)=10^j-1$ and $d(j)=10^j$, since it is trivial to point out that 1$0^j=2^j \cdot 5^j$, so $2 \nmid (10^j-1) \wedge 5 \nmid (10^j-1)$.

By Lemma \ref{Lemma1}, we can state that $\smalloverbrace{a}=t(j)+m \cdot d(j)$ is always characterized by a constant congruence speed $V\left(\smalloverbrace{a} \right) \geq j$.

Anyway, in order to clearly show that $V\left(\smalloverbrace{a} \right) \geq j$ holds, let $x_i \in \{0,1,2,3,4,5,6,7,8,9\}$.

Hence,
\begin{equation} \label{eq:27}
\smalloverbrace{a}=\sum_{i=j}^h{x_i} 	\cdot 10^i+10^j-1=x_h \_ \hspace{0.2mm}x_{(h-1)}\_ \dots \_ \hspace{0.2mm}x_{(j+1)}\_ \hspace{0.2mm} x_j\_9\_9\_ \dots \_9\_9
\end{equation}

so that it is evident that $V\left(\smalloverbrace{a} \right)$ only depends on the length of the rightmost repunit ($9$’s) of 
$10^j<\smalloverbrace{a}<10^{h+1}$.

Thus, picking any $a \geq 2$ such that $V(a)$ is arbitrarily large, we have shown that there always exist infinitely many prime numbers ${\smalloverbrace{a}}^{*}\equiv 9 \pmod{10}$ which are characterized by $V\left({\smalloverbrace{a}}^{*}\right) \geq V(a)(x_j \neq 9 \Rightarrow V\left({\smalloverbrace{a}}^{*}\right)=V(a)=j$, see Equation (\ref{eq:27})).

Lastly, $a=1 \Leftrightarrow V(a)=0$ (see Reference \cite{Ripa:16}, Definition 2), so $V\left({\smalloverbrace{a}}^{*}\right)>V(1)$ for any ${\smalloverbrace{a}}^{*} \in \mathbb{P}$.

Therefore, we can write that $\forall V(a) \in \mathbb{N}$, $\exists \hspace{1mm} {\smalloverbrace{a}}^{*} \in \mathbb{P} : V\left({\smalloverbrace{a}}^{*}\right) \geq V(a)$, and this concludes the proof of Theorem \ref{Theorem 2} (since $a \not\equiv 0 \pmod{10} \Rightarrow V(a,b)=V(a) \in \mathbb{N}_{0}$, $\forall b>a$).
\end{proof}

\begin{corollary} \label{Corollary 3}
$\forall V(a) \in \mathbb{N}_{0}$, $\exists^{\infty} \hspace{1mm} {\smalloverbrace{a}}^{*} \in \mathbb{P} : V\left({\smalloverbrace{a}}^{*} \right) > V(a)$.
\end{corollary}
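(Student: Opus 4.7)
The plan is to observe that Corollary~\ref{Corollary 3} is a direct strengthening of Theorem~\ref{Theorem 2} obtained by shifting the Dirichlet exponent up by one. Concretely, given any value $n := V(a) \in \mathbb{N}_{0}$, I would prove existence of infinitely many primes ${\smalloverbrace{a}}^{*}$ with $V\left({\smalloverbrace{a}}^{*}\right) \geq n+1$, which yields the strict inequality $V\left({\smalloverbrace{a}}^{*}\right) > V(a)$ required.

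First I would set $t := 10^{n+1}-1$ and $d := 10^{n+1}$, and verify $\gcd(t,d)=1$ by noting $10^{n+1}=2^{n+1}\cdot 5^{n+1}$ shares no prime factor with $10^{n+1}-1$. Dirichlet's theorem on arithmetic progressions (already invoked in the proof of Theorem~\ref{Theorem 2}) then guarantees infinitely many primes of the form $p = t + m \cdot d = (m+1)\cdot 10^{n+1}-1$, $m \in \mathbb{N}_{0}$. Each such $p$ therefore lies in $\smalloverbrace{\mathbb{A}}$ with parameters $(k,n) = (m,\, n+1)$ in the notation of Definition~\ref{def1.4}.

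Next I would apply Lemma~\ref{Lemma1}, which, since $n+1 \geq 1$, ensures that every element of $\smalloverbrace{\mathbb{A}}$ of the form $(m+1)\cdot 10^{n+1}-1$ satisfies $V\bigl((m+1)\cdot 10^{n+1}-1\bigr) \geq n+1$. Combining this with the previous step yields infinitely many primes ${\smalloverbrace{a}}^{*}$ with $V\left({\smalloverbrace{a}}^{*}\right) \geq n+1 > n = V(a)$, which is exactly the conclusion of Corollary~\ref{Corollary 3}.

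There is really no serious obstacle here, as the work is done by Theorem~\ref{Theorem 2} and Lemma~\ref{Lemma1}; the only point requiring a moment of care is the edge case $V(a)=0$, which corresponds to $a=1$. In that case the argument still applies with $n+1 = 1$, $t = 9$, $d = 10$, so Dirichlet produces infinitely many primes $\equiv 9 \pmod{10}$, and Lemma~\ref{Lemma1} guarantees each has constant congruence speed $\geq 1 > 0 = V(1)$. No case analysis on whether $n+1$ lies in $\{1\}$ versus $\mathbb{N}-\{0,1\}$ is needed because the inequality $V\left({\smalloverbrace{a}}^{*}\right) \geq n+1$ in Lemma~\ref{Lemma1} is uniform in $n \geq 1$; the subtler equivalence $V=n \Leftrightarrow k \not\equiv 9 \pmod{10}$ is irrelevant for our one-sided bound. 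This closes the proof.
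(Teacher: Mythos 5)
Your proposal is correct and follows essentially the same route as the paper: shift the exponent to $V(a)+1$, apply Lemma~\ref{Lemma1} to get $V\left({\smalloverbrace{a}}^{*}\right)\geq V(a)+1$, and invoke Dirichlet's theorem on the progression $10^{V(a)+1}-1+m\cdot 10^{V(a)+1}$ for the infinitude of primes. You merely spell out the Dirichlet coprimality check and the $V(a)=0$ edge case more explicitly than the paper does.
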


\begin{proof}
In order to prove this corollary of Theorem \ref{Theorem 2}, it is sufficient to take $n=V(a)+1$, so we have $\smalloverbrace{a} = (k+1) \cdot 10^{V(a)+1}-1$ (by Definition \ref{def1.4}).

Thus, $V\left(\smalloverbrace{a}\right) \geq V(a)+1$ is satisfied for any $V(a)$, $k \in \mathbb{N}_{0}$.

It follows that $V\left({\smalloverbrace{a}}^{*}\right)>V(a)$, and (by Dirichlet’s theorem on arithmetic progressions) we know that there are infinitely many bases ${\smalloverbrace{a}}^{*} \in \smalloverbrace{\mathbb{A}} \subset \mathbb{P}$.
\end{proof}

\begin{theorem} \label{Theorem 3}
$\forall a \neq 1 : a \not\equiv 0 \pmod{10}$, $\exists^{\infty} \hspace{1mm} {\overline{\overline{a}}}^{*} \in \mathbb{P} : V\left({\overline{\overline{a}}}^{*} \right) = V(a)$.
\end{theorem}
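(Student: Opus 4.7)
The plan is to combine Lemma~\ref{Lemma1} with Dirichlet's theorem on arithmetic progressions, split into two cases according to whether $n := V(a)$ equals $1$ or is at least $2$. In each case, the goal is to produce infinitely many primes of the form $(2m+1) \cdot 10^{n} - 1$ whose constant congruence speed is exactly $n$; by Definition~\ref{def1.4}, such primes automatically belong to $\overline{\overline{\mathbb{A}}}$. The key structural observation behind this choice is that the odd factor $2m+1$ guarantees that the $(n{+}1)$-th rightmost digit of $(2m+1) \cdot 10^{n} - 1$ is even, i.e.\ never $9$, which is exactly the non-degeneracy condition required by Lemma~\ref{Lemma1} for $V$ to equal $n$ and not exceed it.

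For $n \geq 2$, I would apply Dirichlet to the progression $10^{n} - 1 \pmod{2 \cdot 10^{n}}$. The coprimality $\gcd(10^{n} - 1,\, 2 \cdot 10^{n}) = 1$ is immediate, since $10^{n} - 1$ is odd and coprime to $10^{n}$. Dirichlet then produces infinitely many primes $\overline{\overline{a}}^{*} = (2m+1) \cdot 10^{n} - 1$. Feeding this into Lemma~\ref{Lemma1} with the lemma's parameter $k = 2m$, which is even and therefore not $\equiv 9 \pmod{10}$, forces $V(\overline{\overline{a}}^{*}) = n = V(a)$.

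For $V(a) = 1$, the $n = 1$ branch of Lemma~\ref{Lemma1} imposes the finer restriction (in the lemma's own notation) $k \not\equiv 4 \pmod{5}$, so the modulus $2 \cdot 10 = 20$ of the previous case is not by itself enough. Instead, I would apply Dirichlet to the progression $9 \pmod{100}$, which is a coprime residue class. Every resulting prime has the form $p = 100 j + 9 = (2 \cdot 5 j + 1) \cdot 10 - 1$, so in Lemma~\ref{Lemma1}'s notation $k = 10 j \equiv 0 \pmod{5}$, and therefore $V(p) = 1 = V(a)$. The paper's setup guarantees $V(a) \geq 1$ whenever $a \neq 1$ and $a \not\equiv 0 \pmod{10}$, so these two cases exhaust the hypothesis.

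The genuinely delicate point is the change in Lemma~\ref{Lemma1} between $n = 1$ and $n \geq 2$: the admissible residues for the parameter $k$ live modulo $10$ in the bulk case but modulo $5$ in the base case, forcing the refinement of the AP modulus from $2 \cdot 10^{n}$ to $100$ in the $V(a) = 1$ case. Everything else reduces to checking that the relevant $\gcd$s equal $1$ and that the parity of $k = 2m$ automatically excludes the forbidden residue $9 \pmod{10}$ in the bulk case.
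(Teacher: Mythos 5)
Your proposal is correct and follows essentially the same route as the paper: Dirichlet's theorem applied to the progression $10^{n}-1 \pmod{2\cdot 10^{n}}$ for $n\geq 2$ (so that the $(n{+}1)$-th rightmost digit of $(2m+1)\cdot 10^{n}-1$ is even, forcing $V=n$ via Lemma~\ref{Lemma1}/Equation~(\ref{eq:7})), together with a separate modulus-$100$ argument for $V(a)=1$. The only cosmetic difference is that you use the residue class $9 \pmod{100}$ (i.e.\ $a\equiv 9 \pmod{25}$) where the paper uses $29 \pmod{100}$ (i.e.\ $a\equiv 4 \pmod{25}$); both classes lie in the list of Equation~(\ref{eq:18}), so either choice works.
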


\begin{proof}
$\left(a \neq 1 \wedge a \not\equiv 0 \pmod{10}\right) \Rightarrow V(a) \in \mathbb{N}-\{0\}$ \cite{Ripa:16}. By Dirichlet’s theorem on arithmetic progressions, $\textnormal{gcd}(t,d) = 1 \Rightarrow \exists^{\infty}(t+m \cdot d) \in \mathbb{P}$, where $m \in \mathbb{N}_{0}$. Let $t(j)=10^j-1$ and $d(j)=2 \cdot 10^j$ be defined for every $j \in \mathbb{N}-\{0\}$. Since $2 \cdot 10^j=2^{j+1} \cdot 5^j$ and $10^j-1 \equiv 9 \pmod{10}, \textnormal{gcd}\left(t(j),d(j)\right)=1$ (noticing again that $2 \nmid (10^j-1) \wedge 5 \nmid (10^j-1)$).

Consequently, the arithmetic progression $t(j)+m \cdot d(j)=10^j-1+2 \cdot m \cdot 10^j=(2 \cdot m+1) \cdot 10^j-1$ contains infinitely many primes. Since, $\forall m$, $10 \nmid (2 \cdot m+1)$, $t(j)+m \cdot d(j)=\sum_{i=j}^{h} x_i \cdot 10^i+10^j-1=x_h\_ \hspace{0.2mm} x_{(h-1)}\_ \dots \_ \hspace{0.2mm} x_{(j+1)} \_ \hspace{0.2mm}x_j \_ 9 \_ 9\_ \dots \_9\_9$, where $x_j \in \{0,2,4,6,8\}$. By Equation (\ref{eq:7}), for any given $j \geq 2$, it follows that $V\left((2 \cdot m+1) \cdot 10^j-1 \right)=j$ holds for all integers $m \geq 0$.

Finally, if $V(a)=1$, then let $m \equiv 1 \pmod{10}$. Dirichlet’s theorem on arithmetic progressions implies the existence of infinitely many primes congruent modulo $100$ to $29$, and we know that all of them have a unitary constant congruence speed (since $29 \equiv 4 \pmod {25}$, $a \equiv 4 \pmod {25} \Rightarrow V(a) = 1$). It follows that $\exists^{\infty} c   \in  \mathbb{N}_0$ such that $\left((c \cdot 20+3 \right) \cdot 10-1)  \in \overline{\overline{\mathbb{A}}}$. Since $V\left((c \cdot 20+3) \cdot 10-1 \right)=1$ for every $c$, we have just proved that $\forall c \in \mathbb{N}_0 : \left((c \cdot 20+3) \cdot 10-1 \right)\in \mathbb{P}$, $\exists \hspace{0.5mm} \overline{\overline{a}}^{*} (c)\in \overline{\overline{\mathbb{A}}} : V\left(\overline{\overline{a}}^{*}(c) \right)=1$.

Thus, $\forall j \hspace{-0.1mm} \in \hspace{-0.1mm} \mathbb{N}-\{0\}$, $ \exists^{\infty} m \hspace{-0.1mm} \in \hspace{-0.1mm} \mathbb{N}_0 : \left((2 \cdot m+1) \cdot 10^j-1 \right) \hspace{-0.1mm} \in \hspace{-0.1mm} \mathbb{P} \hspace{1mm} \wedge \hspace{1mm} V \hspace{-1mm} \left((2 \cdot m+1) \cdot 10^j-1 \right)=j$, and this completes the proof of Theorem \ref{Theorem 3}.
\end{proof}

Theorem \ref{Theorem 3} entails the existence of an infinite sequence of primes, which we indicate as $\{q_n \}$, defined by the smallest prime numbers characterized by a constant congruence speed of $n \in \mathbb{N}-\{0\}$.

More specifically, $\{q_n \}=2, 5, 193, 1249, 22943, 2218751, \dots$ is not a monotonic sequence, because $q_{20}=3640476581907922943 < 23640476581907922943 = q_{19}$, and also $q_{54} = \left(2 \cdot 5^{2^{52}}-1 \right) \pmod{10^{52}} < q_{52} = - \left(5^{2^{52}}+2^{5^{52}} \right) \pmod{10^{52}} < q_{53} = 2 \cdot 10^{53}-1$ (see Table \ref{Table:2}).

\vspace{3mm}

As an exercise, we can try to bound the value of $q_{1762063}$. Since Theorem \ref{Theorem 3} implies that $q_{1762063} \in \mathbb{N}$, let us find a lower bound from the inequalities stated in Section \ref{sec:SUBSECTION 2.2}, and in particular we get $q_{1762063} > \sqrt{5^{1762063}-1}$. From \cite{Caldwell:1}, we know that $9 \cdot 10^{1762063}-1= \sum_{j=1762063}^{1762063} 8 \cdot 10^j+10^{1762063}-1 $ is prime, and $9 \cdot 10^{1762063}-1=9 \cdot 10^{V \left(9 \cdot 10^{1762063}-1 \right)}-1$ has a constant congruence speed of $1762063$. It follows that $\sqrt{5} \cdot 5^{881031}<q_{1762063} \leq 9 \cdot 10^{1762063}-1$ (since $2 \nmid n \in \mathbb{N}-\{0\} \Rightarrow \sqrt{5^n} \notin \mathbb{N}$).

\begin{table}[H]
    \centering
    \begin{tabular}{|>{\columncolor[gray]{0.8}}c|c|}
        \hline
        \cellcolor{Golden}\boldmath
\bm{$n$}&\cellcolor{Carribean}\bm{$\{q_n\}$} \\\hline \unboldmath
$1$&$2$\\\hline
$2$&$5$\\\hline
$3$&$193$\\\hline
$4$&$1249$\\\hline
$5$&$22943$\\\hline
6&$2218751$\\\hline
7&$4218751$\\\hline
8&$74218751$\\\hline
9&$574218751$\\\hline
10&$30000000001$\\\hline
11&$281907922943$\\\hline
12&$581907922943$\\\hline
13&$6581907922943$\\\hline
14&$123418092077057$\\\hline
15&$480163574218751$\\\hline
16&$19523418092077057$\\\hline
17&$40476581907922943$\\\hline
18&$2152996418333704193$\\\hline
19&$23640476581907922943$\\\hline
20&\color{red}$3640476581907922943$\\\hline
21&$803640476581907922943$\\\hline 
$\dots$&$\dots$\\\hline
51&\color{red}$138023544317662666830362972182803640476581907922943$\\\hline
52&$56138023544317662666830362972182803640476581907922943$\\\hline
53&$199999999999999999999999999999999999999999999999999999$\\\hline
54&\color{red}$1114846846461792218008213239954784512519836425781249$
            \\ \hline
\end{tabular}
\caption{${\{q_n \}}$ for $n \leq 21$ and $51 \leq n \leq 54$. Table entries are in red if (and only if) $q_n<q_{n-1}$, so $q_{20}<q_{19}$ (as $q_{51}<q_{50}$) and $q_{54}<q_{53}$ imply that ${\{q_n \}}$ is a non-monotonic sequence of primes. Furthermore, we have also $q_{54}<q_{52}$. \label{Table:2}
}
\end{table}


\section{Conclusion}

$V(a,b)$, the congruence speed of the integer tetration ${^{b}a}$, certainly does not depend on $b$, for any $a \in \mathbb{N}-\{0\}$ which is not a multiple of $10$, if $b$ is larger than $a$ (i.e., the criterion $b>a$ always holds). Thus, let us take any $b=b(a)$ that assures the constancy of the congruence speed of $a$; then Equations (\ref{eq:6}), (\ref{eq:7}), (\ref{eq:10}), (\ref{eq:11}), (\ref{eq:14}), (\ref{eq:15}), (\ref{eq:16}), (\ref{eq:17}), (\ref{eq:18}), and (\ref{eq:26}) return the set of all the bases whose (constant) congruence speed is any given $V(a) \in \mathbb{N}-\{0\}$, and we know from \cite{Ripa:16} that $V(a)=0 \Leftrightarrow a=1$.

Therefore, we can easily determine $\tilde{a} \left( V(a)\right)$, the smallest $a \equiv \{1, 2, 3, 4, 5, 6, 7, 8, 9\} \pmod {10}$ whose constant congruence speed is equal to any given positive integer. Since $\tilde{a}(0)=1$, $\tilde{a}(1)=2$, and $\tilde{a} \left( V(a) : V(a) \geq 2 \right)=2^n \cdot \left( (-1)^{n-1}+2 \right)-i^{n \cdot (n-1)}$ \cite{OEIS:14}, we can finally conclude that the conjecture stated in Reference \cite{Ripa:16} is true.

In Section \ref{sec:Section4}, for any $n \in \mathbb{N}-\{0\}$, we also proved the existence of infinitely many prime numbers with a constant congruence speed of $n$, defining the related sequence $\{q_n \}$ of the smallest primes such that $V \hspace{-0.5mm} \left(q(n) \right)=n$, and consequently showing that $\{q_n \}$ is not monotonic.

In the present paper we have only considered radix-$10$, but our results can be clearly extended to different numeral systems, as shown by \cite{Germain:2} which was inspired by \cite{Urroz:19}; this observation suggests a topic for the next research article.


\section*{Acknowledgements}

The author is very much thankful to the anonymous referees of \textit{Notes on Number Theory and Discrete Mathematics} whose comments and valuable suggestions have substantially improved the correctness and the overall quality of this paper.

\bibliographystyle{unsrt}
\bibliography{The_congruence_speed_formula}

\end{document}